\documentclass[11pt, reqno]{amsart}
\usepackage[english]{babel}
\usepackage[T1]{fontenc}
\usepackage{amsmath}
\usepackage{amsthm}
\usepackage{amsfonts}
\usepackage{amssymb}
\usepackage{lmodern} 
\usepackage{indentfirst}		
\usepackage{microtype} 			
\usepackage[numbers]{natbib}
\usepackage{hyphenat}
\usepackage{pgfplots}  	
\usepackage[utf8]{inputenc}
\usepackage{indentfirst}

\definecolor{blackgreen}{RGB}{0,80,0}

\usepackage[plainpages=false,pdfpagelabels,pdfusetitle,pdfdisplaydoctitle, pdfstartpage=1, pdfstartview={{XYZ null null 1.00}}]{hyperref}
\hypersetup{
	pdftitle={INLSa},    
	pdfauthor={Luccas Campos, Mykael Cardoso and Luiz Gustavo Farah},    
	pdfnewwindow=true,      
	colorlinks=true,      
	linkcolor=blackgreen,         
	citecolor=red}

\usepackage[top=2cm, bottom=2cm, left=2cm, right=2cm]{geometry}
\usepackage{mathtools}
\mathtoolsset{showonlyrefs}

\setlength{\parindent}{1cm}



\newcommand{\Real}{\mathbb R}
\newtheorem{thm}{Theorem}[section]
\newtheorem{prop}[thm]{Proposition}
\newtheorem{lemma}[thm]{Lemma}
\newtheorem{claim}[thm]{Claim}
\newtheorem{rem}[thm]{Remark}
\newtheorem{definition}[thm]{Definition}

\numberwithin{equation}{section}

\title{Blow-up for the 3D intercritical inhomogeneous NLS with inverse-square potential}
\author{Luccas Campos, Mykael Cardoso and Luiz Gustavo Farah} 
\date{} 

\begin{document}
\maketitle
	
\begin{abstract}\noindent
In this paper we study the focusing inhomogeneous 3D nonlinear Schr\"odinger equation with inverse-square potential in the mass-supercritical and energy-subcritical regime. We first establish local well-posedness in $\dot{H}_a^{s_c}\cap \dot{H}_a^1$, with $s_c=3/2-(2-b)/2\sigma$. Next, we prove the blow-up of the scaling invariant Lebesgue norm for radial solutions and also, with an additional restriction, in the non-radial case.
\end{abstract}

\section{Introduction}
We consider the initial value problem (IVP) for the inhomogeneous nonlinear Schr\"odinger equation with inverse-square potential $(\mbox{INLS})_a$ 
\begin{equation}
\begin{cases}
i \partial_tu  +\Delta u -\frac{a}{|x|^2}u + |x|^{-b} |u|^{2 \sigma}u = 0, \,\,\, x \in \mathbb{R}^3, \,t>0,\\
u(0) = u_0.
\end{cases}
\label{PVI}
\end{equation}


The (INLS)$_a$ model can be interpreted as an extension to the nonlinear Schr\"odinger equation. For instance, when $a=b=0$ we have the classical nonlinear Schr\"odinger equation, extensively studied over the last decades (see Sulem and Sulem \citep{Sulem}, Bourgain \citep{Bo99}, Cazenave \citep{cazenave}, Linares and Ponce \citep{LiPo15}, Fibich \citep{Fi15}, Tao \cite{TaoBook} and the references therein).
The case $b=0$ and $a\neq 0$ is known as the NLS equation with inverse square potential, denoted by (NLS)$_{a}$,
\begin{equation}\label{NLSa}
i\partial_tu +\Delta u -\frac{a}{|x|^2}u + |u|^{2\sigma} u =0.
\end{equation}

This equation has also been attracting attention in recent years (see for instance, \cite{Burq}, \cite{Murphy}, \cite{Murphy1}, \cite{Okazawa2012}, \cite{Zhang}). The equation \eqref{NLSa} appears in several physical settings, such as in quantum field equations or black hole solutions of the Einstein's equations (see e.g. \cite{MR0397192}).   

Alternatively, if $a=0$ and $b\neq 0$ we have the inhomogeneous NLS equation, denoted by INLS, i.e.,
\begin{equation}\label{INLS}
i\partial_tu+\Delta u +|x|^{-b} |u|^{2\sigma} u =0,
\end{equation}
which also has received substantial attention recently (see e.g. \cite{g_8}, \cite{CARLOS}, \cite{paper2},  \cite{CHL20},\cite{Campos}, \cite{DK21}, \cite{MMZ21}, \cite{M_Proc22}, \cite{AT21}, \cite{AK21}, \cite{KLS21}, \cite{LS21}. It can model, for example, nonlinear optical systems with spatially dependent interactions (see e.g. \cite{belmonte2007lie} and the references therein). In particular, it can be thought of as modeling
inhomogeneities in the medium in which the wave propagates (see for instance \cite{MALOMED1}). The (INLS)$_a$ can thus be seen as a general model for various physical contexts, and it has been already studied in \cite{CG_Zamp_21} and \cite{AJK_DCDS_22}.

Here, we are interested in the following range of the parameters:  

\begin{equation}\label{restrictions}
    a > 0\, , \quad 0<b<\frac32 \quad \text{and} \quad \dfrac{2-b}{3} < \sigma <2-b.
\end{equation}
The first restriction in \eqref{restrictions} is related to the dispersive estimate \eqref{dispersive_decay}. It is necessary to deduce Lemma \ref{Lemma-Str2}, which is used, in turn, to prove our local well-posedness result (Theorem \ref{LWP}). The restriction $a>0$ also ensures, via the sharp Hardy inequality
\begin{align}\label{SHI}
	\||x|^{-1}u\|_{L^2}\leq 2\|\nabla u\|_{L^2},\,\,\,\mbox{for all}\,\,\, u \in H^1(\Real^3). \,\,\, 
\end{align}
the coercivity and boundedness of the linear operator in \eqref{PVI}.

Indeed, consider the quadratic form
$$
Q_a(u)=\int |\nabla u(x)|^2+\frac{a}{|x|^2}|u(x)|^2\,dx
$$
defined in $C^{\infty}_c(\mathbb{R}^3\backslash\{0\})$ and its Friedrichs extension denoted by
$$
\mathcal L_a=-\Delta+\frac{a}{|x|^2}
$$
(see Killip, Miao, Visan, Zhang and Zheng \cite{KMVZZ18} and the references therein for more details).
The restriction $a>0$ (or, more broadly, $a>-{1}/{4}$) guarantees that $\mathcal L_a > 0$ and, defining $\|f\|^2_{\dot{H}^1_a}=\|\mathcal L_a^{\frac12}
 f\|^2_{L^2}=(f,\mathcal L_a f)_{L^2}=Q_a(f)$, we also have
\begin{align}\label{equiv}
	\|f\|_{\dot{H}^1_a}\sim \|\nabla f\|_{L^2}, \,\,\,\mbox{for all} \,\,\,f\in \dot H^1(\Real^3).
\end{align}
This is a particular case of the $L^p$-                                                                                                    based Sobolev spaces adapted to the Schr\"odinger operator with inverse-square potential $\mathcal L_a$ equipped with the norms
$$
\|f\|_{H_a^{s,p}}=\|(1+\mathcal L_a)^{\frac s2}f\|_{L^p}\,\,\, \mbox{and}\,\,\, \|f\|_{\dot{H}_a^{s,p}}=\|\mathcal L_a^{\frac s2}f\|_{L^p}.
$$
We use the usual convention for $p=2$, i.e. $\|f\|_{H_a^{s}}=\|f\|_{H_a^{s,2}}$ and $\|f\|_{\dot{H}_a^{s,2}}=\|f\|_{\dot{H}_a^{s}}$. In \cite{KMVZZ18} the authors studied the relations between these spaces and those defined via $(\Delta)^{s/2}$ and we recall their results in Lemma \ref{EquivHs} below. For instance, when $s>0$ and we have
\begin{align}\label{equiv0}
\|\mathcal L_a^\frac s2 f\|_{L^r}\sim \|D^s f\|_{L^r}, 
\end{align} 
for $0<s<2$ and $1<                                                                                                                                                                                                                                                                                                                                                                                                                                                                                                                                                                                                                                                                                                         r<3/s$. Note that the case $s=1$ is a direct consequence of the sharp Hardy inequality as explained above. The relation \eqref{equiv0} will be one of the key ingredients to deduce the local well-posedness result in Theorem \ref{LWP}.

On the other hand, the second and third conditions in \eqref{restrictions} are related to the local well-posedness of \eqref{PVI}. Indeed, the scale-invariant Sobolev space for the initial data in view of the scale symmetry
$$
\lambda\mapsto u_{\lambda}(x,t):=\lambda^{\frac{2-b}{2\sigma}} u(\lambda x,\lambda^2t),
$$
corresponds to $\dot{H}^{s_c}(\Real^3)$, where $s_c=\frac{3}{2}-\frac{2-b}{2\sigma}$. The equation is called intercritical if $0<s_c<1$, which is equivalent to the third condition in \eqref{restrictions}. The condition $0<b<3/2$ stems from the more general restriction $0<b<\min\{N/2,2\}$, for $u_0 \in \dot H^1(\mathbb R^N)$, and it is exploited in detail by Correia and the first and third authors \cite{CCF_Arxiv_22}.

Next, we recall mass and energy conservation along the flow of the $(\mbox{INLS})_a$ equation. If $u(t)$ is a solution to \eqref{PVI} on some time interval $I\subset \mathbb{R}$, $0\in I$, then for any $t\in I$
\begin{equation}\label{Mass}
M[u(t)] =\int  |u(x,t)|^2\, dx =M[u_0]
\end{equation}
and
\begin{equation}\label{Energy}
E_a[u(t)] =\frac12 \int  |\nabla u(x,t)|^2\, dx +\frac a2\int |x|^{-2}|u(x,t)|^2\,dx - \frac{1}{2\sigma+2}\int |x|^{-b}|u(x,t)|^{2\sigma+2}\,dx=E_a[u_0]. 
\end{equation}

Recently, Guzmán and the first author \cite{LUCCASCARLOS} obtained the local well-posedness for the $(\mbox{INLS})_a$ equation in $H^1(\Real^N)$ via Kato's method, which is based on the contraction mapping principle and the Strichartz
estimates. More precisely, they considered $N\geq 3$ and showed that for initial data $u_0\in H^1_a(\Real^N)$ and 
$$
\left\{
\begin{array}{ll}
a>-\frac{(N-2)^2}{4},& \mbox{ if }\frac{1-b}{N}<\sigma\leq \frac{1-b}{N-2}\mbox{ and }0<b<1\\
a>-\frac{(N-2)^2}{4}+\left(\frac{\sigma(N-2)-(1-b)}{2\sigma+1}\right)^2,& \mbox{if} \max\{0,\frac{1-b}{N}\}<\sigma<\frac{2-b}{N-2}
\end{array}
\right.
$$
there exists a $T=T(\|u_0\|_{\dot H^1_a}, N, \sigma, b)$ and a unique solution of \eqref{PVI} satisfying
\begin{align}
	u\in C\left([0,T];H^1_a(\Real^N)\right)\cap L^q\left([0,T]; H^{1,r}_{a}(\Real^N)\right)
\end{align}
where $(q,r)$ is a $L^2$-admissible pair. \\

They also proved the existence of a ground state and a dichotomy between global existence and blow up below the ground state threshold. Furthermore, they proved small data scattering, constructed wave operators in $H^1_a$ and proved that suitable space-time bounds imply scattering in the intercritical case. Later on, An, Jan and Kim 
\cite{AJK_DCDS_22} extended the results of Duyckaerts and Roudenko \cite{DR_Going} for the NLS and of the first and second authors \cite{CC_Going} for the INLS to include a description of the global behavior of solutions to (INLS)$_a$ with mass-energy possibly greater than or equal to the ground state threshold.



We are interested here in studying blow-up rates for (INLS)$_a$. To that end, we first obtain the local theory in $\dot{H_a^{s_c}}(\Real^3)\cap \dot H_a^1(\Real^3)$. We exclude the $L^2$ norm from the well-posedness theorem since it is subcritical with respect to the scaling of the equation. Therefore, the procedure adopted in the proof of Theorem \ref{scteo2} produces a sequence of functions whose mass is unbounded. In order to be able to obtain uniform bounds, we work with a Morrey-Campanato type semi-norm $\rho$, defined by \eqref{defrho}, which somehow mimics the mass and is defined for functions merely in $\dot{H}^{s_c}$. Moreover, the control of this quantity allows us to show that global solutions must have positive energy in the intercritical case (see Theorem \ref{scteo1} below). Next, we state our local theory result.

\begin{thm}\label{LWP} Let  $a>0$, $0<b<\frac{3}{2}$ and $\frac{2-b}{3}<\sigma<2-b$.  If $u_0 \in \dot{H_a^{s_c}}(\Real^3)\cap \dot H_a^1(\Real^3) $, then there exist $T=T(\|u_0\|_{\dot{H}^{s_c}_a\cap \dot{H}^1_a }) > 0$ and a unique solution $u$ to the IVP \eqref{PVI} satisfying
	$$
	u\in C\left([-T,T];\dot{H}^{s_c}_a\cap \dot{H}^1_a \right)\bigcap L^{q}\left([-T,T];\dot{H}_a^{s_c,p}\cap \dot{H}_a^{1,p} \right)\bigcap L^{\overline{q}}\left([-T,T]; L^{\overline{p}} \right),
	$$
	for any ($q,p$) $L^2$-admissible and $(\overline{q},\overline{p})$ $\dot H^{s_c}$-admissible. 
\end{thm}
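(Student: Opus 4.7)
The plan is to prove the theorem by a Banach fixed-point argument applied to the Duhamel map
$$
\Phi(u)(t) = e^{-it\mathcal{L}_a}u_0 + i\int_0^t e^{-i(t-s)\mathcal{L}_a}\bigl(|x|^{-b}|u|^{2\sigma}u\bigr)(s)\,ds,
$$
set on a closed ball of the space
$$
X_T = C\bigl([-T,T];\dot{H}^{s_c}_a\cap \dot{H}^1_a\bigr)\cap L^q_T\bigl(\dot{H}^{s_c,p}_a\cap \dot{H}^{1,p}_a\bigr)\cap L^{\bar q}_T L^{\bar p}_x,
$$
for a carefully chosen $L^2$-admissible pair $(q,p)$ and an $\dot H^{s_c}$-admissible pair $(\bar q,\bar p)$. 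As is customary in this type of argument, I would endow this ball with the weaker metric $L^{\bar q}_T L^{\bar p}_x$ to avoid having to differentiate the nonlinearity when checking contractivity.

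The first step is to invoke the Strichartz estimates for the propagator $e^{-it\mathcal{L}_a}$ provided by Lemma \ref{Lemma-Str2}, available both in $L^2$- and in $\dot H^{s_c}$-admissible regimes. Combined with the norm equivalence \eqref{equiv0}, which turns $\|\mathcal L_a^{s/2} f\|_{L^r}$ into $\|D^s f\|_{L^r}$ whenever the pair $(s,r)$ satisfies $0<s<2$ and $1<r<3/s$, they reduce the boundedness of $\Phi$ on the ball to estimating the nonlinearity $F(u)=|x|^{-b}|u|^{2\sigma}u$ in the appropriate dual Strichartz spaces at the two regularity levels $s_c$ and $1$.

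The heart of the proof is then the nonlinear estimate. I would split $\mathbb{R}^3 = B\cup B^c$, where $B=B(0,1)$. On $B^c$ the weight $|x|^{-b}$ is bounded and one obtains an intercritical-NLS type bound via the fractional Leibniz and chain rules (the restriction $s_c<1$ coming from $\sigma<2-b$ guarantees that the fractional chain rule applies at level $s_c$, while at level $1$ the usual product rule suffices). On $B$ the singular factor is handled by placing $|x|^{-b}$ in the Lorentz space $L^{3/b,\infty}$ and applying a H\"older/Sobolev embedding argument, of Hardy--Sobolev type, absorbing the singularity into a fractional derivative of $u$. In every term we pay careful attention to keep a strictly positive time exponent $T^\theta$ after H\"older in time, so that smallness as $T\to 0$ is preserved, and we route all integrability exponents through the range allowed by \eqref{equiv0} so that the norms $\|\mathcal L_a^{s/2}\cdot\|_{L^r}$ can be treated as $\|D^s\cdot\|_{L^r}$.

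The main obstacle is selecting the admissible pairs $(q,p)$ and $(\bar q,\bar p)$ so that all the constraints close \emph{simultaneously} at both regularity levels: Strichartz admissibility, the range of validity of \eqref{equiv0}, the Lorentz integrability of $|x|^{-b}$, the hypotheses of the fractional chain rule, and the positivity of the time exponent. It is this bookkeeping that ultimately forces the restrictions on $b$ and $\sigma$ in \eqref{restrictions} (beyond the role of $a>0$ in Lemma \ref{Lemma-Str2}). Once the nonlinear estimate is established, the same computation applied to $F(u)-F(v)$ in the weaker $L^{\bar q}_T L^{\bar p}_x$ metric yields the contraction; this produces existence and uniqueness in the ball, and a standard Strichartz bootstrap propagates the solution to every admissible pair, giving the full conclusion of the theorem.
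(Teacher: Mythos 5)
Your overall architecture --- a contraction for the Duhamel operator, the Strichartz estimates of Lemmas \ref{Lemma-Str1}--\ref{Lemma-Str2}, the equivalence \eqref{equiv0} to pass between $\mathcal L_a^{s/2}$ and $D^s$, a ball equipped with the weaker $\dot H^{s_c}$-level metric, and a near-origin/far-origin splitting of the weight --- is the same as the paper's. There are two points of divergence, one cosmetic and one substantive.

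The cosmetic one: you split at the fixed unit ball, whereas the paper splits at radius $R$ and then optimizes $R=T^{1/2}$ so that the two regions contribute the same, scaling-sharp power $T^{\sigma(1-s_c)}$ (Lemmas \ref{est0}, \ref{est2}, \ref{est1}). For Theorem \ref{LWP} itself any strictly positive power of $T$ suffices, so your fixed ball is harmless here; but the sharp exponent is precisely what feeds the blow-up rate in Proposition \ref{Blowalt}, so your version, while adequate for this statement, would not support the paper's later results.

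The substantive gap is at regularity level $s_c$. You propose to estimate $D^{s_c}\left(|x|^{-b}|u|^{2\sigma}u\right)$ separately on $B$ and on $B^c$, dropping the (bounded) weight on $B^c$ and placing $|x|^{-b}\in L^{3/b,\infty}$ on $B$, and then to invoke the fractional Leibniz and chain rules of Lemma \ref{leibniz} on each piece. This is exactly the step the paper's remark before Claim \ref{claim_x_b} flags as invalid: $D^{s_c}$ is non-local, so $\|D^{s_c}(|x|^{-b}f)\|_{L^{p'}_x(B^c)}$ is not controlled by quantities in which the weight has been discarded, and on $B$ a Leibniz-rule separation would require $D^{s_c}(|x|^{-b})\sim |x|^{-b-s_c}$ to lie in a global Lebesgue space, which it does not (it only lies in the weak space $L^{3/(b+s_c),\infty}(\Real^3)$). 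At level $1$ this issue is absent because $\nabla$ is local, and there your argument essentially coincides with Lemma \ref{est0}; but at level $s_c$ your estimate does not close as written. The paper's fix is the smooth partition $\psi_R+(1-\psi_R)$ together with Claim \ref{claim_x_b}, which bounds $D^{s_c}(|x|^{-b}\psi_R)$ and $D^{s_c}(|x|^{-b}(1-\psi_R))$ in two different Lebesgue spaces with the compensating factors $R^{\pm\varepsilon}$; alternatives would be the weighted Leibniz rules of \cite{CCF_Arxiv_22} or a genuinely global Lorentz-space Leibniz rule in the spirit of \cite{AT21}. Supplying one of these devices is the one genuinely new ingredient your outline is missing.
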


We note that the nonlinear estimates used in the proof of the previous theorem are more refined then the previous ones that appeared in the literature (see, for instance, \cite[Lemmas 3.2-3.4]{CFG20}). Indeed, in Lemmas \ref{est0}, \ref{est2}, \ref{est1} we are able to achieve the power of $T$ predicted by the scaling of the equation. Such precise estimate implies the desired blow-up rate \eqref{BUA2} used as an additional  assumption in our previous works  \cite[Theorem 1.2.]{CF20}, \cite[Theorem 1.4]{CF23}. The strategy employed here is inspired by the new ideas introduced in \cite{CCF_Arxiv_22} and we split the spacial domain in a ball of radius $R$ and its complement and then optimize the choice of $R$.

%

Next, we study the existence of blow-up solutions and the lower bound for the blow-up rate.

\begin{thm}\label{scteo1} Let $a>0$, $0<b<\frac{3}{2}$ and $\frac{2-b}{3}<\sigma<2-b$. Consider $u_0\in \dot{H_a^{s_c}}(\Real^3)\cap \dot H_a^1(\Real^3) $ such that $E(u_0)\leq 0$.Then the maximal time of existence $T^{\ast}>0$ of the corresponding solution $u(t)$ to \eqref{PVI} is finite in each one of the following cases
\begin{itemize}
\item[$(a)$] $u_0$ is radial;
\item[$(b)$] $\sigma<\min\left\{2-b,\frac{2}{3}\right\}$.
\end{itemize}
\end{thm}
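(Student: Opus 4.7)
The plan is to execute a Glassey-type convexity argument on a localized (truncated) virial functional; the truncation is necessary because $u_0$ lies in $\dot H^{s_c}_a\cap\dot H^1_a$ but need not belong to $L^2$, let alone $L^2(|x|^2\,dx)$. Fix a smooth radial cut-off $\psi_R$ coinciding with $|x|^2$ on $\{|x|\leq R\}$ and bounded by $CR^2$ globally, and set
\[
V_R(t)=\int \psi_R(x)\,|u(x,t)|^2\,dx.
\]
Through a standard approximation argument (regularize $u_0$ so that the computation is classical, then pass to the limit via the uniqueness from Theorem \ref{LWP}), the Pohozaev--Morawetz calculation yields
\[
V_R''(t)=-4(3\sigma+b-2)\,\|u(t)\|_{\dot H^1_a}^2+8(3\sigma+b)\,E_a(u_0)+\mathcal E_R(u(t)),
\]
where the first two terms come from the bulk contributions of $\|\nabla u\|_{L^2}^2+a\||x|^{-1}u\|_{L^2}^2$ and $\int|x|^{-b}|u|^{2\sigma+2}$ on $\{|x|\leq R\}$ (combined by conservation of energy), and $\mathcal E_R$ collects remainder integrals supported on $\{|x|\geq R\}$ together with a bi-Laplacian term $\int\Delta^2\psi_R\,|u|^2$ localized on the annulus $R\leq|x|\leq 2R$. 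Intercriticality gives $3\sigma+b-2>0$ and by hypothesis $E_a(u_0)\leq 0$, so the bulk part is already bounded above by $-4(3\sigma+b-2)\|u(t)\|_{\dot H^1_a}^2\leq 0$.

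The crux is controlling $\mathcal E_R(u(t))$ uniformly in $t$ so it can be absorbed. The bi-Laplacian piece, which in Glassey's classical argument relies on mass conservation, is instead estimated through the Morrey--Campanato semi-norm $\rho$ introduced in the introduction: one has $\int_{R\leq|x|\leq 2R}|u|^2\lesssim R^{2s_c}\rho(u)$, hence $\bigl|\int \Delta^2\psi_R\,|u|^2\bigr|\lesssim R^{-2(1-s_c)}\rho(u)\to 0$ as $R\to\infty$, since $s_c<1$ and $\rho(u(t))$ is controlled on the maximal interval by the local theory. For the exterior nonlinear and gradient errors I would argue case by case. In case $(a)$, radial symmetry is preserved by the flow, so the three-dimensional Strauss bound $|u(x)|\lesssim|x|^{-1}\|u\|_{\dot H^1}$ yields $\int_{|x|\geq R}|x|^{-b}|u|^{2\sigma+2}\lesssim R^{-(2\sigma+b)}\|u\|_{\dot H^1}^{2\sigma+2}$, which is absorbed into the main negative term by Young's inequality once $R$ is taken large. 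In case $(b)$, without the radial gain, one uses the pointwise bound $|x|^{-b}\leq R^{-b}$ on the exterior together with Sobolev embedding and Gagliardo--Nirenberg; the restriction $\sigma<2/3$ is precisely what is needed to guarantee that the interpolation exponents leave enough room to absorb the resulting term into $\|u\|_{\dot H^1_a}^2$.

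Putting these estimates together, for $R$ sufficiently large one obtains
\[
V_R''(t)\leq -2(3\sigma+b-2)\,\|u(t)\|_{\dot H^1_a}^2+8(3\sigma+b)\,E_a(u_0)+o_R(1).
\]
Since $E_a(u_0)\leq 0$, a positive lower bound $\|u(t)\|_{\dot H^1_a}\geq c>0$ on the existence interval (which follows from $E_a(u_0)\leq 0$, $u_0\not\equiv 0$, and a sharp Gagliardo--Nirenberg/Hardy-type inequality) yields $V_R''(t)\leq -c<0$ uniformly in $t$. Integrating twice, $V_R(t)$ becomes negative in finite time, contradicting $V_R\geq 0$, so $T^*<\infty$. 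The main obstacle, compared to the classical Glassey argument, is the absence of $L^2$ conservation: the Morrey--Campanato semi-norm $\rho$ is the key device compensating for it in the bi-Laplacian term, while radial symmetry in case $(a)$ and the inhomogeneity $|x|^{-b}$ together with $\sigma<2/3$ in case $(b)$ provide the exterior control that usually comes for free from a compactly supported weight.
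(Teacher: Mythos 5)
Your setup (truncated virial, the semi-norm $\rho$ as a surrogate for the missing mass conservation, radial symmetry versus $\sigma<2/3$ for the exterior term) matches the paper's starting point, but the endgame you propose --- a pointwise-in-time Glassey convexity argument $V_R''(t)\le -c<0$ for a \emph{fixed} large $R$ --- does not close, and two of your key estimates are not available. First, the radial bound you invoke, $|u(x)|\lesssim |x|^{-1}\|u\|_{\dot H^1}$, is false: the Strauss/radial Sobolev inequality in $\mathbb{R}^3$ reads $|u(x)|\lesssim |x|^{-1}\|u\|_{L^2}^{1/2}\|\nabla u\|_{L^2}^{1/2}$ (a pure $\dot H^1$ version with decay is ruled out by scaling), and the $L^2$ norm is exactly what you do not control here. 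The paper replaces it by Lemma \ref{lemaradialGN1}, which trades the missing $L^2$ factor for powers of $\rho(u,R)$, at the price of an error $C_\eta R^{-2(1-s_c)}\{[\rho(u,R)]^{\gamma}+\rho(u,R)\}$. Second, your assertion that ``$\rho(u(t))$ is controlled on the maximal interval by the local theory'' begs the question: local well-posedness gives continuity of $t\mapsto\rho(u(t),R)$, not a uniform bound on $[0,\infty)$, and the uniform control of $\rho$ is precisely the hard estimate (Proposition \ref{prop1}). Moreover that proposition only bounds $\rho(u(\tau_0),M_0^{\alpha_1}\sqrt{\tau_0})$, i.e.\ at radii growing like $\sqrt{\tau_0}$; since $\rho(\cdot,R)$ is non-increasing in $R$, this says nothing about $\rho(u(t),R)$ for fixed $R$ and large $t$. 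Consequently your error term $\mathcal E_R(u(t))$ cannot be made small uniformly in $t$ for fixed $R$, and the inequality $V_R''\le -c$ cannot be established. (A further, smaller gap: $E\le 0$ combined with \eqref{GNacrit2} yields $\|u(t)\|_{L^{\sigma_c}}\ge K_a$, hence a lower bound on the $\dot H^{s_c}$ norm, but not the lower bound $\|u(t)\|_{\dot H^1_a}\ge c>0$ that your convexity argument needs.)

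The paper avoids these obstructions by never asking for a pointwise-in-time sign on $z_R''$. It integrates the localized virial inequality \eqref{estint} twice in time, keeping the $\rho$-errors inside the double time integral with $R$ chosen in terms of the endpoint $\tau_0$ (estimate \eqref{ineq2}); a bootstrap then yields Proposition \ref{prop1}: the uniform bound $\rho(u(\tau_0),M_0^{\alpha_1}\sqrt{\tau_0})\le C_1M_0^2$ and the averaged dispersive estimate $\int_0^{\tau_0}(\tau_0-\tau)\|\mathcal L_a^{1/2}u(\tau)\|^2d\tau\lesssim\tau_0^{1+s_c}$. Assuming the solution is global, the latter produces times $\tau_n\to\infty$ with $\lambda_u(\tau_n)\gtrsim\sqrt{\tau_n}\to\infty$, while Proposition \ref{prop2} transports a fixed amount of normalized mass, $\lambda_u^{-2s_c}(\tau_n)\int_{|x|\le D_*\lambda_u(\tau_n)}|u_0|^2\,dx\ge C_2>0$, back to the \emph{initial} data at scale $D_*\lambda_u(\tau_n)$. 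This contradicts \eqref{radial2}, which forces that quantity to vanish as the scale goes to infinity because $u_0\in L^{\sigma_c}$. So the contradiction is between mass concentration and dispersion at large scales, not between convexity and positivity of $V_R$; if you want to repair your argument, you would essentially have to reconstruct Propositions \ref{prop1} and \ref{prop2}, at which point you are running the paper's proof.
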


It should be emphasized that in case $(b)$ the radial assumption is not necessary and instead we need an extra restriction for the parameter $\sigma$. However, if $b>4/3$, then ${2-b}<\frac{2}{3}$ and the result covers all the intercritical range for $\sigma$ (see \eqref{restrictions}).

Let $\sigma_c=\frac{6\sigma}{2-b}$. In view of the Sobolev embedding $\dot{H}^{s_c} \subset L^{\sigma_c}$ and \eqref{equiv0}, we have
\begin{equation}\label{equiv_sigma_c}
\|u(t)\|_{L^{\sigma_c}}\lesssim \|u(t)\|_{\dot{H}^{s_c}}\sim \|u(t)\|_{\dot{H}_a^{s_c}}
\end{equation}
and then $\dot{H}_a^{s_c} \subset L^{\sigma_c}$. The next result states that this scaling invariant Lebesgue norm blows up with a certain lower bound.

\begin{thm}\label{scteo2}
Assume $a>0$, $0<b<\frac{3}{2}$ and $\frac{2-b}{3}<\sigma<2-b$. Let $u_0\in \dot{H_a^{s_c}}(\Real^3)\cap \dot H_a^1(\Real^3)$ be such that the maximal time of existence $T^{\ast}>0$ of the corresponding solution $u$ to \eqref{PVI} is finite.
Suppose that either $u_0$ is radial or $\sigma<\min\left\{2-b,\frac{2}{3}\right\}$. Then there exists $\alpha=\alpha(N,\sigma,b)>0$ such that
	\begin{align}\label{rate}
	  \|u(t)\|_{L^{\sigma_c}}\geq |\log (T^{\ast}-t)|^{\alpha},\,\,\,as\,\,\,t\to T^{\ast}.
	\end{align}
\end{thm}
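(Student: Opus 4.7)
The strategy is a contradiction argument that combines the rescaling symmetry of \eqref{PVI} with the refined local well-posedness behind Theorem \ref{LWP}. Assume by contradiction that \eqref{rate} fails for some $\alpha>0$ (to be chosen small enough later): there exists a sequence $t_n\uparrow T^*$ with
$$\|u(t_n)\|_{L^{\sigma_c}}\leq |\log(T^*-t_n)|^\alpha.$$
Since $T^*<\infty$, the subcritical local theory in $\dot H^1_a$ forces $\|u(t_n)\|_{\dot H^1_a}\to\infty$, so the scale $\lambda_n:=\|u(t_n)\|_{\dot H^1_a}^{-1/(1-s_c)}\to 0^+$ is well defined, and the rescaled functions
$$v_n(y,s):=\lambda_n^{(2-b)/(2\sigma)}\,u(\lambda_n y,\,t_n+\lambda_n^2 s)$$
solve \eqref{PVI} with $\|v_n(0)\|_{\dot H^1_a}\sim 1$ by \eqref{equiv}.

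By the scale-invariance of $L^{\sigma_c}$ and H\"older applied to the definition of the Morrey--Campanato semi-norm $\rho$ (whose scaling matches that of $L^{\sigma_c}$), one has
$$\rho(v_n(0))\lesssim \|v_n(0)\|_{L^{\sigma_c}}=\|u(t_n)\|_{L^{\sigma_c}}\leq |\log(T^*-t_n)|^\alpha.$$
The key step is then to invoke the refined version of the local theory, in which the $\dot H^{s_c}_a$ hypothesis on the data is replaced by a bound on $\rho$; the sharp $T$-power required here is precisely what is delivered by Lemmas \ref{est0}, \ref{est2}, \ref{est1} together with the ball/complement splitting in $R$ announced after Theorem \ref{LWP}. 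This yields a lifespan
$$T_n\geq c_0\,\rho(v_n(0))^{-\mu}\geq c_0\,|\log(T^*-t_n)|^{-\mu\alpha}$$
for some $\mu=\mu(\sigma,b)>0$, which upon undoing the scaling becomes $T^*-t_n\geq \lambda_n^2 T_n$.

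Combining with the standard scaling lower bound $\|u(t_n)\|_{\dot H^1_a}\geq c(T^*-t_n)^{-(1-s_c)/2}$, equivalently $\lambda_n^2\lesssim T^*-t_n$, the single-step inequality only yields the qualitative fact that $\|u(t_n)\|_{L^{\sigma_c}}$ stays bounded below by a positive constant. To extract the logarithmic rate one must iterate: apply the refined local theory successively at $t_{k+1}:=t_k+\lambda_k^2 T_k$, producing a sequence $\delta_k:=T^*-t_k$ obeying a recursion of the form
$$\delta_{k+1}\leq \delta_k\bigl(1-c\,|\log \delta_k|^{-\mu\alpha}\bigr),$$
which, compared with the associated continuous differential inequality, cannot be satisfied while keeping $\delta_k>0$ for all $k$ unless the assumed logarithmic upper bound on $\|u(t_k)\|_{L^{\sigma_c}}$ is eventually violated. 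Choosing $\alpha>0$ small enough so that $\mu\alpha<1$ closes the argument and produces the stated lower bound \eqref{rate}.

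The main obstacle is the sharp lifespan estimate $T_n\geq c_0\,\rho(v_n(0))^{-\mu}$: it requires re-running the contraction-mapping proof of Theorem \ref{LWP} with $\rho$ in place of $\dot H^{s_c}_a$, carefully tracking the exact power of $T$ in every nonlinear estimate through the ball/complement splitting and the optimization in $R$ emphasized by the authors. Once this \emph{polynomial} (not merely qualitative) dependence of the lifespan on $\rho$ is in hand, the iteration step becomes a discrete Gr\"onwall-type analysis and the logarithmic rate in \eqref{rate} is essentially the inverse of the function relating lifespan to $L^{\sigma_c}$-norm in the refined local theory.
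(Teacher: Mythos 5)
Your proposal does not follow the paper's argument and, as written, has gaps that I do not see how to close. The central step you rely on --- a local theory whose lifespan depends polynomially on the Morrey--Campanato quantity $\rho(v_n(0))$ in place of $\|v_n(0)\|_{\dot H^{s_c}_a}$ --- is not established in the paper and cannot be obtained by ``re-running'' the contraction argument of Theorem \ref{LWP}: every nonlinear estimate there (Lemmas \ref{est0}, \ref{est2}, \ref{est1}) closes in norms such as $\|\mathcal L_a^{s_c/2}u\|_{S(L^2;I)}$ and $\|u\|_{S(\dot H^{s_c};I)}$, which require genuine $\dot H^{s_c}$ control of the data. The semi-norm $\rho$ is dominated by the $L^{\sigma_c}$ norm (see \eqref{radial1}) and hence is strictly weaker than $\dot H^{s_c}$; a bound on $\rho(v_n(0))$ together with $\|v_n(0)\|_{\dot H^1_a}\sim 1$ does not place $v_n(0)$ in any space where the Duhamel iteration runs. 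Two further problems arise even if one grants that lifespan bound. First, the negation of \eqref{rate} only gives the logarithmic upper bound on $\|u(t)\|_{L^{\sigma_c}}$ along a sequence $t_n$, whereas your iteration $t_{k+1}=t_k+\lambda_k^2T_k$ needs it at every iterate. Second, the blow-up alternative \eqref{BUA2} gives only the one-sided bound $\lambda_k^2\lesssim T^*-t_k$; without an upper bound on the blow-up rate (which is not available) the increments $\lambda_k^2T_k$ may be summable, so the recursion need not push the solution past $T^*$ and no contradiction results.

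The paper's proof is structurally different and does not argue by extending the lifespan. Following Merle--Rapha\"el, it renormalizes $u$ at time $t$ via \eqref{v}, sets $N(t)=-\log\lambda_u(t)$ (which is large by \eqref{BUA2}), and then uses Proposition \ref{prop1} (uniform control of $\rho$ at scale $\sqrt{\tau_0}$ and the integrated dispersive bound \eqref{prop1i}) together with Proposition \ref{prop2} (a quantitative lower bound \eqref{tprop2} for the mass of the \emph{initial} renormalized data in a ball of controlled radius) to produce, for each $i\in[\sqrt{N(t)},N(t)]$, an annulus $\mathcal C_i$ at scale $\lambda_v(\tau_i)\sim e^{i/2}/L(t)$ carrying a fixed amount of $\int|v(0)|^{\sigma_c}$. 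Choosing indices spaced by $p(t)\sim\log M(t)$ makes these annuli pairwise disjoint, so summing over roughly $\sqrt{N(t)}$ of them yields $\|v(0)\|_{L^{\sigma_c}}^{\sigma_c}\gtrsim \sqrt{N(t)}\,\|v(0)\|_{L^{\sigma_c}}^{-\alpha_4\sigma_cs_c}$, which is exactly the logarithmic lower bound \eqref{N1}. The mechanism is accumulation of the critical norm on many disjoint spatial scales, not a lifespan/Gr\"onwall iteration; if you want to salvage your approach you would first have to prove a local well-posedness theorem in a space normed by $\rho$ plus $\dot H^1$, which is a substantial open issue rather than a technical rerun.
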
 


Note that this phenomenon does not occur in the mass-critical case ($s_c = 0$), since the scaling invariant $L^2$-norm is preserved along the flow by the mass conservation \eqref{Mass}. 

We emphasize that our previous studies about the INLS model in \cite{CF20, CF23} prove the lower bound \eqref{rate} upon imposing an additional assumption and only implies $\limsup_{t\to T^*} \|u(t)\|_{\dot{H}^{s_c}}=\infty$. The novelty here is that we prove that this assumption always holds in the intercritical case, see Proposition \ref{Blowalt} below. Using similar ideas, the same statement can also be shown to hold for the INLS setting, improving our previous results in this case.

We expect that our blow-up results in the last two theorems extend to higher dimensions, if one can generalize the local theory in Theorem \ref{LWP} to dimension $N\geq 3$. The main obstacle is the need of Strichartz estimates for exotic pairs, especially the non $L^2$-admissible ones. This can be achieved, for example, by proving the dispersive $L^1$-$L^\infty$ decay for the linear evolution, which can then be combined with the theory developed by Foschi \cite{Foschi05} to prove the desired estimates.

This paper is organized as follows. In Section \ref{Prel}, we introduce some notation and preliminary estimates. The well-posedness theory in $\dot{H_a^{s_c}}(\Real^3)\cap \dot H_a^1(\Real^3)$ is presented in Section \ref{WPT}. The last section is devoted to prove the existence of blow-up solutions and also a lower bound for the blow-up rate.

\section{Preliminaries}\label{Prel}

We start this section by introducing the notation used throughout the paper. We use $c$ to denote various constants that may vary line by line. Given any positive quantities $a$ and $b$, the notation $a\lesssim b$ means that $a \leq cb$, with $c$ uniform with respect to the set where $a$ and $b$ vary. We denote by $p'$ the H\"older conjugate of $1 \leq p \leq \infty$ and we
use $a^+$ and $a^-$ to denote $a +\varepsilon$ and $a -\varepsilon$, respectively, for a sufficiently small $\varepsilon > 0$.

The following  fractional Leibniz and Chain rules obtained will be used in the sequel (see also the generalizations suitable for singular weights proved in \cite{CCF_Arxiv_22}).
\begin{lemma}[Fractional calculus \cite{CW91, KPV93}]\label{leibniz} 
Let $N\geq 1$, $s \in (0,1]$, $p, q_1,q_2, r_1, r_2\in (1,\infty)$ with $\frac{1}{p} = \frac{1}{q_i} + \frac{1}{r_i}$, $i = 1, 2$ and $G\in C^1(\mathbb{C})$. Then
\begin{itemize}
\item[(i)] $\displaystyle \|D^s(fg)\|_{L^p(\mathbb R^N)}  \lesssim \|D^sf\|_{L^{q_1}(\mathbb R^N)}\|g\|_{L^{r_1}(\mathbb R^N)}+\|f\|_{L^{p_2}(\mathbb R^N)}\|D^s g\|_{L^{q_2}(\mathbb R^N)}.$\\
\item[(ii)] $\displaystyle \|D^sG(f)\|_{L^p(\mathbb R^N)}  \lesssim \|G'(f)\|_{L^{q_1}(\mathbb R^N)}\|D^sf\|_{L^{r_1}(\mathbb R^N)}.$
\end{itemize}

\end{lemma}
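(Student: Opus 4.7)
The statement collects two well-known classical inequalities: the fractional Leibniz rule of Kato--Ponce type (i) and the fractional chain rule (ii), both for $s \in (0,1]$. My plan would be to give parallel proofs based on a Littlewood--Paley decomposition for (i) and an integral/maximal-function representation for (ii), restricting attention to $0<s<1$ since the endpoint $s=1$ is the ordinary Leibniz/chain rule obtained by Hölder's inequality on $\nabla(fg) = f\nabla g + g\nabla f$ and $\nabla G(f)=G'(f)\nabla f$.

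For part (i), I would fix Littlewood--Paley projections $P_k$ with $\sum_k P_k=\mathrm{Id}$ and decompose the product into the three standard pieces
\begin{equation*}
fg = \sum_{k} P_{<k-2}f\,P_k g + \sum_k P_k f\,P_{<k-2} g + \sum_{|k-k'|\le 2} P_k f\,P_{k'} g,
\end{equation*}
i.e.\ low--high, high--low, and high--high paraproducts. Applying $D^s$ and using that $D^s$ acts as a multiplier of size $\sim 2^{ks}$ on a frequency shell $|\xi|\sim 2^k$ (Bernstein), together with the Littlewood--Paley square function characterization of $L^p$ for $1<p<\infty$ and the Fefferman--Stein vector-valued maximal inequality, each paraproduct is controlled by one of the two terms on the right-hand side of (i). The high--high piece is the delicate one: there $D^s$ falls on a sum $\sum_k P_k f\,\tilde P_k g$ whose frequency support is not localized, so one uses Bernstein on the outer projection $P_j$ and sums in $k\ge j$ with the exponent $s>0$ providing convergence. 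Alternatively, one may view this as a Coifman--Meyer multiplier estimate applied to the symbol $m(\xi,\eta)=(|\xi+\eta|^s-|\xi|^s-|\eta|^s)/(|\xi|^s+|\eta|^s)$, which is bounded and smooth away from the origin.

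For part (ii), I would use the pointwise identity
\begin{equation*}
G(f(x))-G(f(y)) = (f(x)-f(y))\int_0^1 G'\bigl(\theta f(x)+(1-\theta)f(y)\bigr)\,d\theta,
\end{equation*}
together with the singular integral representation
\begin{equation*}
D^s h(x) = c_{N,s}\int_{\mathbb R^N}\frac{h(x)-h(y)}{|x-y|^{N+s}}\,dy, \qquad 0<s<1.
\end{equation*}
Splitting the $y$-integral into $|x-y|\le r$ and $|x-y|>r$ and optimizing $r$ (a standard trick that trades one derivative for a power of the Hardy--Littlewood maximal function), one obtains a pointwise bound of the form
\begin{equation*}
|D^s G(f)(x)| \lesssim M(G'(f))(x)^{\theta}\,M(D^s f)(x)^{1-\theta}\cdot(\cdots),
\end{equation*}
which, after taking $L^p$ norms and applying Hölder together with the $L^q$-boundedness of $M$ for $q>1$, yields (ii). The main technical obstacle is handling the high--high paraproduct in (i) in the range $p\le 1$ (not needed here, since $p>1$) and justifying the pointwise manipulations in (ii) when $G$ is merely $C^1$; the latter is dealt with by an approximation argument that mollifies $G$ and passes to the limit using dominated convergence and the $L^{q_1}$-integrability of $G'(f)$. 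Since both inequalities are stated as in \cite{CW91,KPV93}, I would ultimately just invoke those references after sketching the above outline.
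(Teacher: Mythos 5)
The paper does not prove this lemma at all: it is quoted verbatim from the literature, with the proof deferred entirely to the cited references \cite{CW91, KPV93}, so there is no in-paper argument to compare yours against step by step. Your outline is a faithful sketch of how these estimates are actually established in those references and their descendants: the paraproduct decomposition with Bernstein, the Littlewood--Paley square function, and the Fefferman--Stein vector-valued maximal inequality for the Leibniz rule, and a first-difference/maximal-function argument (or, in Christ--Weinstein, a Littlewood--Paley version of the same idea) for the chain rule. Two small imprecisions are worth flagging if you were to write this out in full. First, the symbol $m(\xi,\eta)=(|\xi+\eta|^s-|\xi|^s-|\eta|^s)/(|\xi|^s+|\eta|^s)$ is bounded but is \emph{not} a Coifman--Meyer multiplier for $0<s<1$ (its derivatives are not controlled by $(|\xi|+|\eta|)^{-|\alpha|}$ near $\xi+\eta=0$), which is precisely why the high--high piece requires the separate Bernstein-plus-summation argument you describe rather than a black-box multiplier theorem; the ``alternatively'' remark should be dropped or qualified. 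Second, the pointwise bound you state for the chain rule, $|D^sG(f)(x)|\lesssim M(G'(f))(x)^{\theta}M(D^sf)(x)^{1-\theta}\cdot(\cdots)$, is not quite what the splitting-and-optimizing argument produces; the clean route is either the Littlewood--Paley proof of Christ--Weinstein or the pointwise estimate $|D^sG(f)|\lesssim M(G'(f))\cdot$ (a suitable square/maximal function of $f$ controlling $D^sf$ in $L^{r_1}$), followed by H\"older. Since the paper only needs the statement as a citation, simply invoking \cite{CW91, KPV93}, as you do at the end, is exactly what the authors intend.
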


Next, we recall an important tool proved by Killip, Miao, Visan, Zhang and Zheng. It was proved for a large range of parameters, and we only state the case that interests us here. 
\begin{lemma}[Equivalence of Sobolev norms \cite{KMVZZ18}]\label{EquivHs} If $a >0$, $0<s<1$, and $1<p< 3/s$ then
	\begin{equation}
		\|D^s f\|_{L^p}\lesssim \| \mathcal{L}_a^{\frac{s}{2}} f\|_{L^p} \lesssim \|D^s f\|_{L^p}\;\textnormal{for all}\;f\in \mathbb{C}^\infty_c(\mathbb{R}^3 ).    
	\end{equation}
\end{lemma}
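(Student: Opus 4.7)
The plan is to establish both inequalities by comparing the semigroups $e^{-t\mathcal L_a}$ and $e^{t\Delta}$, exploiting that $a>0$ makes the potential nonnegative. The key input is the Gaussian upper bound
$$
\bigl|e^{-t\mathcal L_a}(x,y)\bigr| \;\lesssim\; t^{-3/2}\exp\bigl(-c|x-y|^2/t\bigr),
$$
together with the pointwise domination $0 \leq e^{-t\mathcal L_a}(x,y) \leq e^{t\Delta}(x,y)$, which follows from the Feynman--Kac formula and the nonnegativity of the potential. Together these let us treat $\mathcal L_a$ as a controlled perturbation of $-\Delta$ in the appropriate $L^p$ range.

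For the bound $\|\mathcal L_a^{s/2} f\|_{L^p} \lesssim \|D^s f\|_{L^p}$, I would use Balakrishnan's subordination representation
$$
\mathcal L_a^{s/2} f \;=\; \frac{-1}{\Gamma(-s/2)} \int_0^\infty \lambda^{-s/2-1}\bigl(e^{-\lambda \mathcal L_a} - 1\bigr) f \, d\lambda,
$$
and the analogous identity for $(-\Delta)^{s/2}$. Subtracting them and applying Duhamel's formula
$$
e^{-\lambda \mathcal L_a} - e^{\lambda \Delta} \;=\; -\,a\int_0^\lambda e^{-(\lambda-\tau)\mathcal L_a}\, |x|^{-2}\, e^{\tau\Delta}\, d\tau
$$
expresses $\bigl(\mathcal L_a^{s/2} - (-\Delta)^{s/2}\bigr) f$ as a weighted integral with $|x|^{-2}$ sandwiched between two heat semigroups. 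After carrying out the $\tau$- and $\lambda$-integrations (using scaling and the Gaussian bound), the $L^p$ estimate reduces to the Hardy-type inequality $\||x|^{-s} g\|_{L^p} \lesssim \|D^s g\|_{L^p}$, which holds precisely in the range $1<p<3/s$.

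For the reverse bound $\|D^s f\|_{L^p} \lesssim \|\mathcal L_a^{s/2} f\|_{L^p}$, I would run the symmetric argument: write $f = \mathcal L_a^{-s/2}\bigl(\mathcal L_a^{s/2} f\bigr)$ using
$$
\mathcal L_a^{-s/2} g \;=\; \frac{1}{\Gamma(s/2)} \int_0^\infty t^{s/2-1} e^{-t\mathcal L_a} g\, dt,
$$
and show that $D^s \mathcal L_a^{-s/2}$ is bounded on $L^p$ by an entirely parallel Duhamel expansion, together with $L^p$-boundedness of Riesz transforms and the same Hardy inequality as before.

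The main obstacle is the critical scaling of $|x|^{-2}$: this potential sits exactly at the borderline of Hardy's inequality, so the argument has no room to waste. Concretely, one must verify the auxiliary Hardy-type bound $\||x|^{-s}g\|_{L^p}\lesssim \|D^s g\|_{L^p}$ in the \emph{sharp} range $1<p<3/s$, and the double integral in $\tau$ and $\lambda$ has to be estimated tightly enough to avoid logarithmic losses at the scaling borderline. Outside this range the equivalence genuinely fails (as one sees by testing $|x|^{-\beta}$-type profiles), so the Gaussian heat-kernel estimates for $e^{-t\mathcal L_a}$ and Littlewood--Paley theory adapted to $\mathcal L_a$ appear indispensable for closing the argument.
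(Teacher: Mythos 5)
The paper does not actually prove this lemma: it is quoted from Killip--Miao--Visan--Zhang--Zheng \cite{KMVZZ18}, so there is no internal argument to compare against and I can only assess your sketch on its own terms --- and it has a genuine gap. The central problem is that your argument is perturbative while the statement is not. The potential $a|x|^{-2}$ scales exactly like the Laplacian, so in the Balakrishnan--Duhamel expansion
\begin{equation}
\bigl(\mathcal L_a^{s/2}-(-\Delta)^{s/2}\bigr)f \;=\; c_s\, a\int_0^\infty \lambda^{-s/2-1}\int_0^\lambda e^{-(\lambda-\tau)\mathcal L_a}\,|x|^{-2}\,e^{\tau\Delta}f\,d\tau\,d\lambda
\end{equation}
the best termwise bound available is $\||x|^{-2}e^{\tau\Delta}f\|_{L^p}\lesssim \tau^{-1+s/2}\|D^sf\|_{L^p}$ (Hardy plus heat smoothing); the $\tau$-integral then produces $\lambda^{s/2}$, and what remains is $\int_0^\infty \lambda^{-1}\,d\lambda$, which diverges logarithmically. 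This is not something one can repair by "estimating tightly": the double integral is exactly scale-invariant, there is no small parameter and no gain of decay at any scale, so every absolute-value estimate of this expansion must fail. Closing the argument requires genuine almost-orthogonality between scales, which is precisely why the proof in \cite{KMVZZ18} proceeds via sharp two-sided heat-kernel bounds (recording the extra vanishing of the kernel near the origin for $a>0$), a Mikhlin-type multiplier theorem for $\mathcal L_a$, and Littlewood--Paley square-function estimates adapted to $\mathcal L_a$, combined with Hardy's inequality and Riesz-transform bounds. A further symptom that a perturbative comparison cannot work is that in the general theorem the admissible range of $p$ depends on $a$ through $\sigma=\tfrac{d-2}{2}-\sqrt{(\tfrac{d-2}{2})^2+a}$, a dependence that no Duhamel series in powers of $a$ could detect.

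Your closing sentence essentially concedes the point by declaring Littlewood--Paley theory adapted to $\mathcal L_a$ indispensable without constructing it; as written, the proposal is an outline that stalls at its only hard step. The ingredients you do identify --- the Gaussian domination $0\le e^{-t\mathcal L_a}(x,y)\le e^{t\Delta}(x,y)$ for $a>0$, the fractional Hardy inequality in the sharp range $1<p<3/s$, and the role of Riesz transforms --- are all genuinely used in \cite{KMVZZ18}, but they enter as inputs to the multiplier and square-function machinery rather than to a semigroup-difference expansion.
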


Now, we define the notion of admissible pairs.

\begin{definition}\label{Hs_adm} If $s \in (-1,1)$, the pair $(q,r)$ is called $\dot{H}^s$\textit{-admissible} if it satisfies the condition
\begin{equation}\label{hs_adm_eq}
\frac{2}{q} = \frac{3}{2}-\frac{3}{r}-s,    
\end{equation}
where
$$
2 \leq q,r \leq \infty.
$$
In particular, if $s=0$, we say that the pair is $L^2$-admissible.
\end{definition}

\begin{definition}\label{As}
Consider the set
\begin{equation}
    \mathcal{A}_0 = \left\{(q,r)\text{ is } L^2\text{-admissible} \left|
    \, 2 \leq r \leq 6
    \right.
    \right\}.
\end{equation}
For $s \in (-1,1)$, consider also
\begin{equation}
    \mathcal{A}_s = \left\{(q,r)\text{ is } \dot{H}^{s}\text{-admissible} \left|\,
    \left(\frac{6}{3-2|s|}\right)^+ \leq r \leq 6^-
    \right.
    \right\}.
\end{equation}
If $s \in [0,1)$, we define the following Strichartz norm
\begin{equation}
	\|u\|_{S(\dot{H}^s,I)} = \sup_{(q,r)\in \mathcal{A}_s}\|u\|_{L_I^qL_x^r},	
\end{equation}
and the dual Strichartz norm
\begin{equation}
	\|u\|_{S'(\dot{H}^{-s},I)} = \inf_{(q,r)\in \mathcal{A}_{-s}}\|u\|_{L_I^{q'}L_x^{r'}}.	
\end{equation}
For $s=0$, we shall write $S(\dot{H}^0,I) = S(L^2,I)$ and $S'(\dot{H}^0,I) = S'(L^2,I)$. If $I=\mathbb{R}$, we will omit $I$.
\end{definition}

Here, we are mainly interested in the case $a>0$ and $x\in \mathbb{R}^3$. In particular, the relation \eqref{equiv0} holds for $a>0$, $0<s<2$ and $r<3/s$, which implies 
\begin{align}\label{equivsc1}
	\|f\|_{\dot{H}^{s_c}_a}\sim \|f\|_{\dot{H}^{s_c}} \,\,\, \mbox{and} \,\,\, \|f\|_{\dot{H}^{1}_a}\sim \|f\|_{\dot{H}^{1}}.
\end{align}

%

In this work, we consider the following Strichartz estimates for $s \in [0,1)$ (see Kato \cite{Kato94}, Cazenave \cite{cazenave}, Keel and Tao \cite{KT98}, Foschi \cite{Foschi05})
\begin{equation}\label{S2}
\|e^{it\Delta}f\|_{S(\dot{H}^s)} \lesssim\|f\|_{\dot{H}^s},
\end{equation}
and
\begin{equation}\label{KS2}
\left\|\int_\mathbb{R}e^{i(t-\tau)\Delta}g(\cdot,\tau) \, d\tau\right\|_{S(\dot{H}^s,I)} + \left\|\int_0^te^{i(t-\tau)\Delta}g(\cdot,\tau) \, d\tau\right\|_{S(\dot{H}^s,I)}\lesssim\|g\|_{S'(\dot{H}^{-s},I)}.
\end{equation}
\begin{lemma}\label{Lemma-Str1}
	Let $s \in [0,1)$ and $a>-\frac{(N-2)^2}{4}$. Then,
	\begin{eqnarray}
		\| e^{-it \mathcal{L}_a}f \|_{S(\dot{H}^s;I)} &\lesssim &  \|f\|_{\dot{H}^s_a} \label{SE2}\\
		\left \| \int_{t_0}^t e^{-i(t-t')\mathcal{L}_a}g(\cdot,t') dt' \right \|_{S(L^2;I) } &\lesssim& \|g\|_{S'(L^2;I)} \label{SE3}
	\end{eqnarray} 
	\begin{proof}
		See Burq, Planchon, Stalker and Tahvildar-Zadeh \cite{BPST03} and Zhang and Zheng \cite{ZZ14}.
	\end{proof}
\end{lemma}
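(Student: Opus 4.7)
The plan is to reduce both bounds to the pointwise dispersive decay of the perturbed propagator
\begin{equation*}
\|e^{-it\mathcal{L}_a}\|_{L^1\to L^\infty}\lesssim |t|^{-3/2},
\end{equation*}
which is the main ingredient of Burq--Planchon--Stalker--Tahvildar-Zadeh and is precisely where the hypothesis $a>-(N-2)^2/4$ is used. This is the only truly ``potential-sensitive'' input; once it is in hand, the remainder of the argument parallels the free Schrödinger case.

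Combining the dispersive decay with the $L^2\to L^2$ unitarity of $e^{-it\mathcal{L}_a}$ (automatic since $\mathcal{L}_a$ is self-adjoint in this range of $a$), the abstract Keel--Tao machinery yields at once \eqref{SE2} in the case $s=0$ and the retarded inhomogeneous bound \eqref{SE3} for any pair of $L^2$-admissible exponents; for mixed pairs one either applies bilinear Keel--Tao or the Christ--Kiselev lemma, exactly as carried out by Zhang--Zheng. To upgrade \eqref{SE2} to $s\in(0,1)$, I would exploit that $\mathcal{L}_a$ commutes with the propagator. Given $(q,r)\in\mathcal{A}_s$, define the associated $L^2$-admissible pair $(q,\tilde r)$ by $\tfrac1{\tilde r}=\tfrac1{r}+\tfrac{s}{3}$, and chain Sobolev embedding $\dot W^{s,\tilde r}\hookrightarrow L^r$, the equivalence of Sobolev norms from Lemma \ref{EquivHs}, the commutation $\mathcal{L}_a^{s/2} e^{-it\mathcal{L}_a}=e^{-it\mathcal{L}_a}\mathcal{L}_a^{s/2}$, and finally the $s=0$ case already proved:
\begin{equation*}
\|e^{-it\mathcal{L}_a}f\|_{L^q_tL^r_x}\lesssim \|D^s e^{-it\mathcal{L}_a}f\|_{L^q_tL^{\tilde r}_x}\sim \|e^{-it\mathcal{L}_a}\mathcal{L}_a^{s/2}f\|_{L^q_tL^{\tilde r}_x}\lesssim \|\mathcal{L}_a^{s/2}f\|_{L^2}=\|f\|_{\dot H^s_a}.
\end{equation*}

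The delicate point I would have to check carefully is that the auxiliary index $\tilde r$ stays inside the window $1<\tilde r<3/s$ where Lemma \ref{EquivHs} is available, and that the (more standard) Sobolev embedding hypothesis $\tilde r>1$ holds. The strict upper cutoff $r\le 6^-$ in the definition of $\mathcal{A}_s$ gives $\tilde r<6/(1+2s)<3/s$ with room to spare, and the strict lower cutoff $r\ge (6/(3-2s))^+$ keeps $\tilde r$ above $1$; these are precisely the reasons the endpoints are excluded from $\mathcal{A}_s$. This endpoint-avoidance is the main obstacle, and also clarifies why the statement stops short of $s=1$: the equivalence of Sobolev norms breaks down there since the condition $\tilde r<3/s=3$ collapses, so reaching $s=1$ would require an entirely different argument (e.g.\ Hardy-type inequalities in place of Lemma \ref{EquivHs}).
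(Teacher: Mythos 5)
The paper's own ``proof'' is only a citation to \cite{BPST03} and \cite{ZZ14}, so your attempt has to be measured against what those works actually do. Your reduction of the $s\in(0,1)$ case to the $s=0$ case --- passing to the $L^2$-admissible pair $(q,\tilde r)$ with $\tfrac{1}{\tilde r}=\tfrac1r+\tfrac s3$, chaining Sobolev embedding, the equivalence of Sobolev norms, and the commutation of $\mathcal L_a^{s/2}$ with the propagator --- is sound, and your exponent bookkeeping ($2<\tilde r<6/(1+2s)<3/s$) checks out; this is essentially the route taken by Zhang--Zheng.

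The gap is in the foundation. The pointwise decay $\|e^{-it\mathcal L_a}\|_{L^1\to L^\infty}\lesssim|t|^{-3/2}$ is \emph{not} the main ingredient of \cite{BPST03}, and it is not where the hypothesis $a>-\tfrac{(N-2)^2}{4}$ enters: that decay is known to fail for $-\tfrac{(N-2)^2}{4}<a<0$ (the evolution loses integrability near the singularity of the potential) and has only been established for $a\ge 0$. This is precisely why the paper invokes the dispersive bound (via Fanelli--Felli--Fontelos--Primo) only in Lemma \ref{Lemma-Str2}, which is restricted to $a>0$, whereas Lemma \ref{Lemma-Str1} is asserted for the full range $a>-\tfrac{(N-2)^2}{4}$. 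Burq--Planchon--Stalker--Tahvildar-Zadeh obtain the $L^2$-admissible Strichartz estimates \emph{without} the dispersive bound, by decomposing into spherical harmonics and combining weighted resolvent/local-smoothing estimates with the free Strichartz estimates. So your plan, as written, proves the lemma only for $a\ge 0$; for negative $a$ the very first step collapses and a different mechanism is required. A secondary point in the same direction: Lemma \ref{EquivHs} as stated in the paper also assumes $a>0$, so your norm-equivalence step likewise does not cover the whole claimed range --- the general Killip--Miao--Visan--Zhang--Zheng equivalence for negative $a$ holds only on a strictly smaller window of exponents, which would then have to be rechecked against $\tilde r$.
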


\begin{lemma}\label{Lemma-Str2}
	Let $s \in [0,1)$ and $a> 0$. Then,
	\begin{equation}\label{SE5}
		\left \| \int_{t_0}^t e^{-i(t-t')\mathcal{L}_a}g(\cdot,t') dt' \right \|_{S(\dot{H}^s;I) } \lesssim \|g\|_{S'(\dot{H}^{-s};I)},
	\end{equation}
	where $I\subset\mathbb{R}$ be an interval and $t_0\in I$.
	\begin{proof}
	The Application 2 in Fanelli, Felli, Fontelos and Primo \cite{FFFP13} proves

  \begin{equation}\label{dispersive_decay}
      \|e^{-it \mathcal{L}_a} f\|_{L^\infty_x} \lesssim \frac{1}{|t|^{\frac{3}{2}}}\|f\|_{L^1_x},
  \end{equation}
  which corresponds to the same decay as for the free ($a=0$) Schr\"odinger evolution. The estimate \eqref{SE5} is then a corollary of the main result in Foschi \cite{Foschi05}.
	\end{proof}
\end{lemma}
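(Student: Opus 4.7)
The plan is to reduce the inhomogeneous estimate \eqref{SE5} to an abstract Strichartz machinery, using the fact that the propagator $e^{-it\mathcal{L}_a}$ shares the same dispersive and energy bounds as the free Schr\"odinger group on $\mathbb{R}^3$. The two ingredients I would gather are the $L^2$ unitarity of $e^{-it\mathcal{L}_a}$, which follows from self-adjointness of $\mathcal{L}_a$ on its form domain (guaranteed already by $a>-1/4$, in particular for our range $a>0$), and the pointwise dispersive decay
\[
    \|e^{-it \mathcal{L}_a} f\|_{L^\infty_x} \lesssim \frac{1}{|t|^{3/2}}\|f\|_{L^1_x},
\]
which is Application 2 of Fanelli--Felli--Fontelos--Primo \cite{FFFP13} and is where the hypothesis $a>0$ is essential. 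Crucially, the exponent $3/2$ coincides with that of the free Schr\"odinger propagator, so from the point of view of the kernel bounds used in abstract Strichartz theory, $e^{-it\mathcal{L}_a}$ is indistinguishable from $e^{it\Delta}$.

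Next I would feed these two bounds into Foschi's abstract framework \cite{Foschi05}, which considers a one-parameter family $U(t)$ with $\|U(t)f\|_{L^2}\lesssim\|f\|_{L^2}$ and $\|U(t)U(s)^\ast\|_{L^1\to L^\infty}\lesssim |t-s|^{-\sigma}$ and deduces inhomogeneous estimates
\[
    \left\|\int_{t_0}^t U(t)U(\tau)^\ast g(\tau)\,d\tau\right\|_{L^q_tL^r_x} \lesssim \|g\|_{L^{\tilde q'}_t L^{\tilde r'}_x}
\]
over a strictly larger family of pairs than the diagonal Keel--Tao range, namely essentially the $\dot H^s$-admissible pairs of Definition \ref{Hs_adm} paired with the $\dot H^{-s}$-admissible ones on the source side. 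Taking $U(t)=e^{-it\mathcal{L}_a}$, we have $U(t)U(\tau)^\ast=e^{-i(t-\tau)\mathcal{L}_a}$, and Foschi's hypotheses are exactly our two bounds with $\sigma=3/2$, while the scaling relation \eqref{hs_adm_eq} is precisely Foschi's admissibility condition in dimension $3$.

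Finally, I would take the supremum over $(q,r)\in \mathcal{A}_s$ on the left-hand side and the infimum over $\mathcal{A}_{-s}$ on the right-hand side of Foschi's conclusion to recover \eqref{SE5} in the $S(\dot H^s;I)$--$S'(\dot H^{-s};I)$ formulation. The main point to verify, and where I expect the only real bookkeeping, is that the truncated ranges of $r$ appearing in $\mathcal{A}_s$ and $\mathcal{A}_{-s}$, namely $(6/(3-2|s|))^+\le r\le 6^-$, lie strictly inside the open region of pairs for which \cite{Foschi05} is applicable; indeed, the $\varepsilon$-shifts in the endpoints in Definition~\ref{As} are tailored precisely to avoid the boundary/endpoint cases that are forbidden by Foschi's non-sharpness conditions, so the inclusion should be immediate once the admissibility diagrams are unpacked and compared.
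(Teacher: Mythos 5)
Your proposal follows exactly the paper's route: the $L^1$--$L^\infty$ dispersive decay with exponent $3/2$ from Application 2 of Fanelli--Felli--Fontelos--Primo, combined with $L^2$ boundedness, fed into Foschi's abstract inhomogeneous Strichartz framework to cover the non-diagonal $\dot H^s$/$\dot H^{-s}$ pairs. The paper states this in two sentences; your additional remarks on verifying that the truncated ranges in $\mathcal{A}_s$ and $\mathcal{A}_{-s}$ sit inside Foschi's admissible region are correct bookkeeping but not a different argument.
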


\begin{rem}
The result in Lemma \ref{Lemma-Str1} also holds in general space dimension $N>3$. On the other hand, Lemma \eqref{Lemma-Str2} is still an open problem in this general setting.
\end{rem}


\section{Well-posedness in {$\dot{H_a^{s_c}}\cap \dot H_a^1$}}\label{WPT}
In this section we study the well-posedness to  $(\mbox{INLS})_a$ in homogeneous Sobolev spaces $\dot{H_a^{s_c}}(\Real^3)\cap \dot H_a^1(\Real^3)$. The main ingredients are the equivalence of Sobolev norms (Lemma \ref{EquivHs}) and Strichartz estimates (Lemmas \ref{Lemma-Str1}-\ref{Lemma-Str2}).

\subsection{Nonlinear estimates}
We start by proving some preliminary estimates. The novelty here is that we can optimize the estimates around and outside the origin in order to establish the exact power of $T$ in the right hand side of these inequalities. This will be important to deduce a lower bound for the blow-up rate for finite time solutions (see Proposition \ref{Blowalt}).

\begin{lemma}\label{est0} Let $a>0$, $0<b<\frac{3}{2}$ and $\frac{2-b}{3}<\sigma<2-b$. Then, for $I=[-T,T]$ we have
\begin{equation}
\left\|\mathcal L_a^{\frac12}\left(|x|^{-b}|u|^{2\sigma}u\right) \right\|_{S'(L^2;I)} \lesssim T^{\sigma(1-s_c)}\|\mathcal L_a^\frac12 u\|^{2\sigma+1}_{S(L^2;I)}.
\end{equation}
\end{lemma}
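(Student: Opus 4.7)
The plan is to peel off $\mathcal L_a^{1/2}$ via the equivalence of Sobolev norms and then carry out a spatial splitting around the origin, as announced by the authors. By Lemma \ref{EquivHs} applied with $s=1$, $\|\mathcal L_a^{1/2}F\|_{L^{r'}}\sim \|DF\|_{L^{r'}}$ whenever $1<r'<3$, so it suffices to estimate $\|D(|x|^{-b}|u|^{2\sigma}u)\|_{L^{q'}_tL^{r'}_x}$ for some $L^2$-admissible pair $(q,r)$ with $r>3/2$. The fractional Leibniz rule (Lemma \ref{leibniz}(i)) distributes $D$ between $|x|^{-b}$ and $|u|^{2\sigma}u$, and the chain rule (Lemma \ref{leibniz}(ii)) gives $\|D(|u|^{2\sigma}u)\|_{L^p}\lesssim \|u\|_{L^\alpha}^{2\sigma}\|Du\|_{L^\beta}$ with $1/p=2\sigma/\alpha+1/\beta$.

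Next I would split spatially into $B=\{|x|<R\}$ and $B^c$. On $B$ I would control the weight by $\||x|^{-b}\|_{L^{\gamma_B}(B)}\lesssim R^{3/\gamma_B-b}$ with some $\gamma_B<3/b$; on $B^c$ I would use $\||x|^{-b}\|_{L^\infty(B^c)}=R^{-b}$, i.e.\ effectively $\gamma_{B^c}=\infty$. The Sobolev embedding $\|u\|_{L^\alpha}\lesssim \|Du\|_{L^{\tilde\alpha}}$ with $1/\tilde\alpha=1/\alpha+1/3$, together with another use of Lemma \ref{EquivHs}, lets me pick $\alpha$ and $\beta$ on each region so that $(q_u,\tilde\alpha)$ and $(q_\nabla,\beta)$ are both $L^2$-admissible. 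A time-H\"older step then extracts a factor $T^{\theta_E}$ on each region $E$, and combining the three admissibility relations with the spatial H\"older identity $1/r'=1/\gamma_E+2\sigma/\alpha+1/\beta$ reduces $\theta_E$ to the clean expression
\begin{equation*}
\theta_E=\frac{1}{q'}-\frac{2\sigma}{q_u}-\frac{1}{q_\nabla}=1-\frac{\sigma}{2}-\frac{3}{2\gamma_E}.
\end{equation*}

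The two contributions then combine as
\begin{equation*}
\|\mathcal L_a^{1/2}u\|^{2\sigma+1}_{S(L^2;I)}\bigl(R^{3/\gamma_B-b}\,T^{\theta_B}+R^{-b}\,T^{\theta_{B^c}}\bigr),
\end{equation*}
with $\theta_B=1-\sigma/2-3/(2\gamma_B)<1-\sigma/2=\theta_{B^c}$. Since the powers of $R$ carry opposite signs, optimizing with $R\sim T^{(\theta_{B^c}-\theta_B)\gamma_B/3}$ equalizes the two terms and leaves a single factor $T^{(1-\lambda)\theta_B+\lambda\theta_{B^c}}$ with $\lambda=1-b\gamma_B/3$. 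A direct substitution shows that this convex combination collapses to
\begin{equation*}
(1-\lambda)\theta_B+\lambda\theta_{B^c}=1-\frac{\sigma}{2}-\frac{b}{2}=\frac{2-b-\sigma}{2}=\sigma(1-s_c),
\end{equation*}
precisely the scaling-predicted rate on the right-hand side.

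The main obstacle is the bookkeeping of exponents: one must verify that $\alpha\geq 6$ (from $\tilde\alpha\in[2,3)$), $\beta\in[2,6]$, $r'<3$, and $\gamma_B\in(0,3/b)$ can be chosen compatibly across the full intercritical range $(2-b)/3<\sigma<2-b$. A secondary point is the companion Leibniz term in which $D$ falls on the weight itself, producing the more singular factor $|x|^{-b-1}$; the same split-and-optimize scheme applies, but with the constraint $\gamma_B<3/(b+1)$ replacing $\gamma_B<3/b$, and one must check that the resulting $T$-exponent still matches.
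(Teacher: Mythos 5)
Your proposal is correct and follows essentially the same route as the paper's proof: replace $\mathcal L_a^{1/2}$ by $\nabla$ via the equivalence of Sobolev norms, split space into $\{|x|<R\}$ and its complement, apply H\"older and Sobolev embedding with $L^2$-admissible pairs plus H\"older in time, and optimize at $R\sim T^{1/2}$ to land on $T^{(2-b-\sigma)/2}=T^{\sigma(1-s_c)}$. The only cosmetic difference is that the paper places the weight in $L^{3/(b\pm\varepsilon)}$ on both regions (yielding clean $R^{\pm\varepsilon}$ factors) instead of using $L^\infty$ outside the ball, and it carries out the exponent bookkeeping you defer by exhibiting the explicit choices $p=6/(1+2\eta)$ and $\alpha_{\pm}$.
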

\begin{proof}
Let $R>0$ to be chosen later and denote by $B_R$ the closed ball of radius $R$ in $\mathbb{R}^3$. We consider the sets $B^R_+=B_R$ and $B^R_-=\Real^3\backslash B_R$. Let $\eta\in(0,1)$ be such that $(1-2b)/3 < \eta < (3-2b)/2$ (which is possible since $0 < b < 3/2$). Let also $0<\varepsilon\ll\min\{(3-2b)/2-\eta, \eta - (1-2b)/3\}$. Define
\begin{equation}\label{palpha}
p = 6/(1+2 \eta) \quad \mbox{and} \quad \alpha_{\pm} = \frac{6(2\sigma+1)}{5-2b+4\sigma - 2\eta \mp 2\varepsilon}.
\end{equation}

From the choices of $\eta$ and $\varepsilon$, together with $(2-b)/3<\sigma<2-b$ and $0 < b < 3/2$, one gets that $2< p <6 $ and $2<\alpha_{\pm} <3$. Moreover, the following relation holds
\begin{eqnarray}\label{rel1}
\frac{1}{p'}&=&\frac{b+1\pm\varepsilon}{3}+(2\sigma+1)\left(\frac{1}{\alpha_{\pm}}-\frac{1}{3}\right)\\
&=&\frac{b\pm\varepsilon}{3}+2\sigma\left(\frac{1}{\alpha_{\pm}}-\frac{1}{3}\right)+\frac{1}{\alpha_{\pm}}.
\end{eqnarray}
Therefore, from the H\"older inequality and Sobolev embedding we obtain
\begin{align}
\left\|\nabla\left(|x|^{-b}|u|^{2\sigma}u\right)\right\|_{L^{p'}_x(B^R_{\pm})}
	&\lesssim \left(\left\||x|^{-b}\right\|_{L^{\frac{3}{b\pm \varepsilon}}_x(B^R_{\pm})}+\left\||x|^{-b-1}\right\|_{L^{\frac{3}{b+1\pm \varepsilon}}_x(B^R_{\pm})}\right)\|\nabla u\|_{L^{\alpha_{\pm}}_x}^{2\sigma+1}\\ 
	&\lesssim R^{\pm \varepsilon}\|\nabla u\|_{L^{\alpha_{\pm}}_x}^{2\sigma+1}.\label{estle1ger}
\end{align}

Now, let $q$, $\beta_{\pm}$ be such that $(q,p)$, $(\beta_+, \alpha_+)$ and $(\beta_-, \alpha_-)$ are $L^2$-admissible pairs. One can easily check that, for $\theta_\pm=\sigma(1-s_c)\mp \varepsilon/2$, the following condition is satisfied
\begin{eqnarray}\label{rel2}
\frac{1}{q'}=\theta_\pm+\frac{2\sigma+1}{\alpha_\pm},
\end{eqnarray}
then, by the H\"older inequality in the time variable and the equivalence of Sobolev norms, we have
\begin{align}
	\left\|\mathcal L_a^{\frac12}\left(|x|^{-b}|u|^{2\sigma}u\right)\right\|_{L^{q'}_TL^{p'}_x}
	&\lesssim  \left(T^{\theta_{+}}R^{\varepsilon}+T^{\theta_{-}}R^{-\varepsilon}\right)\|\mathcal L^\frac12_a u\|_{S(L^2;I)}^{2\sigma+1}.
\end{align}
Finally, taking $R=T^{\frac12}$, we have the desired estimate.
\end{proof}

\begin{lemma}\label{est2} Let $a>0$, $0<b<\frac32$ and $\frac{2-b}{3}<\sigma<2-b$. Then, for $I=[-T,T]$ we have
	\begin{equation}
		\left\|\mathcal L_a^{\frac{s_c}{2}}\left(|x|^{-b}|u|^{2\sigma}u\right) \right\|_{S'(L^2;I)} \lesssim T^{\sigma(1-s_c)}\|\mathcal L_a^\frac12 u\|_{S(L^2;I)}^{2\sigma}\|\mathcal L_a^\frac{s_c}{2} u\|_{S(L^2;I)}.
	\end{equation}
\end{lemma}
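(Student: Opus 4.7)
The plan is to mirror the argument of Lemma \ref{est0}, with the gradient replaced by the fractional derivative $D^{s_c}$. I would first apply Lemma \ref{EquivHs} to trade $\mathcal{L}_a^{s_c/2}$ for $D^{s_c}$ in the $L^{p'}$-norms involved, which imposes the constraint $p'<3/s_c$. Then I would split the spatial domain into $B^R_+=B_R$ and $B^R_-=\mathbb{R}^3\setminus B_R$ and apply the fractional Leibniz rule (Lemma \ref{leibniz}(i)) to write
\begin{align*}
\|D^{s_c}(|x|^{-b}|u|^{2\sigma}u)\|_{L^{p'}_x(B^R_\pm)}
&\lesssim \|D^{s_c}|x|^{-b}\|_{L^{r_1}_x(B^R_\pm)}\,\||u|^{2\sigma+1}\|_{L^{m_1}_x} \\
&\quad + \||x|^{-b}\|_{L^{r_2}_x(B^R_\pm)}\,\|D^{s_c}(|u|^{2\sigma}u)\|_{L^{m_2}_x},
\end{align*}
exploiting the homogeneity $D^{s_c}|x|^{-b}\sim |x|^{-b-s_c}$ and the chain rule (Lemma \ref{leibniz}(ii)) in the form $\|D^{s_c}(|u|^{2\sigma}u)\|_{L^{m_2}}\lesssim \|u\|_{L^{\alpha_\pm^{\ast}}}^{2\sigma}\|D^{s_c}u\|_{L^{\beta_\pm}}$.

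The key move, which differs from the proof of Lemma \ref{est0}, is that in the first term I would further split $\||u|^{2\sigma+1}\|_{L^{m_1}}\lesssim \|u\|_{L^{\alpha_\pm^{\ast}}}^{2\sigma}\|u\|_{L^{\beta_\pm^{\ast\ast}}}$ by H\"older and then Sobolev-embed the singleton factor using $\|u\|_{L^{\beta_\pm^{\ast\ast}}}\lesssim \|D^{s_c}u\|_{L^{\beta_\pm}}$, rather than $\|\nabla u\|_{L^{\alpha_\pm}}$. This is what produces the correct power count of $2\sigma$ factors of $\mathcal{L}_a^{1/2}u$ and a single factor of $\mathcal{L}_a^{s_c/2}u$ on the right-hand side. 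A direct computation analogous to \eqref{rel1} shows that both terms fit under the same spatial H\"older identity
\begin{equation*}
\frac{1}{p'}=\frac{b\pm\varepsilon}{3}+2\sigma\left(\frac{1}{\alpha_\pm}-\frac{1}{3}\right)+\frac{1}{\beta_\pm}.
\end{equation*}
Choosing $\eta\in(0,1)$ and $\varepsilon>0$ small in the spirit of the proof of Lemma \ref{est0}, setting $p=6/(1+2\eta)$, and selecting $\alpha_\pm,\beta_\pm$ so that $(q_\pm^{(1)},\alpha_\pm)$ and $(q_\pm^{(2)},\beta_\pm)$ are both $L^2$-admissible, the spatial H\"older produces factors $R^{\pm\varepsilon}$ while H\"older in time produces $T^{\sigma(1-s_c)\mp\varepsilon/2}$; finally taking $R=T^{1/2}$ cancels the $\varepsilon$-dependence and yields the claimed bound.

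The main obstacle will be exponent bookkeeping: one must verify simultaneously that $p'<3/s_c$ (so that Lemma \ref{EquivHs} applies on every norm), that all three admissible pairs lie within the valid range $[2,6]$, that the H\"older identities in space and in time close up, and that the $R^{\pm\varepsilon}$ exponents carry opposite signs in the two regions so that the choice $R=T^{1/2}$ is genuinely optimizing. The hypotheses $(2-b)/3<\sigma<2-b$ and $0<b<3/2$ of \eqref{restrictions} are exactly what secures the existence of admissible choices of $\eta$ and $\varepsilon$; in the formal limit $s_c\to 1^-$ one recovers the relations \eqref{rel1}--\eqref{rel2} of Lemma \ref{est0}, which corroborates that the computation closes for $s_c\in(0,1)$.
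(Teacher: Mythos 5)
There is a genuine gap, and it is precisely the one the paper flags in the remark preceding Claim \ref{claim_x_b}. Your plan is to restrict the norm $\|D^{s_c}(|x|^{-b}|u|^{2\sigma}u)\|_{L^{p'}_x(B^R_\pm)}$ to the ball $B_R$ and its complement and then apply the fractional Leibniz rule with the weight factor measured only on $B^R_\pm$. This step fails: the Leibniz rule of Lemma \ref{leibniz}(i) is a global estimate (all norms over $\mathbb{R}^3$), and $D^{s_c}$ is a non-local operator, so the value of $D^{s_c}(fg)$ at a point of $B_R$ depends on $f$ and $g$ everywhere. One cannot expect
\begin{equation}
\|D^{s_c}(|x|^{-b}g)\|_{L^{p'}_x(B^R_\pm)} \lesssim \|D^{s_c}|x|^{-b}\|_{L^{r_1}_x(B^R_\pm)}\|g\|_{L^{m_1}_x} + \||x|^{-b}\|_{L^{r_2}_x(B^R_\pm)}\|D^{s_c}g\|_{L^{m_2}_x}
\end{equation}
for general $g$, even with admissible exponents; this is the exact inequality the authors state is \emph{not} expected to hold. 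The homogeneity identity $D^{s_c}|x|^{-b}\sim|x|^{-b-s_c}$ is true but does not rescue the localization.

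The fix, which is what the paper does, is to perform the splitting at the level of the weight rather than the norm: write $|x|^{-b}=|x|^{-b}\psi_R+|x|^{-b}(1-\psi_R)$ with a smooth cutoff $\psi_R$, apply the Leibniz and chain rules globally to each of the two products, and invoke Claim \ref{claim_x_b} to get $\|D^{s_c}(|x|^{-b}\psi_R)\|_{L^{3/(b+s_c+\varepsilon)}}\lesssim R^{\varepsilon}$ and $\|D^{s_c}(|x|^{-b}(1-\psi_R))\|_{L^{3/(b+s_c-\varepsilon)}}\lesssim R^{-\varepsilon}$. After that substitution your exponent bookkeeping (the H\"older identity in space built on \eqref{rel1}, the $L^2$-admissible pairs $(\beta_\pm,\alpha_\pm)$, the time H\"older via \eqref{rel2}, and the choice $R=T^{1/2}$) matches the paper's computation and does close, producing the $2\sigma$ factors of $\|\mathcal L_a^{1/2}u\|$ and one factor of $\|\mathcal L_a^{s_c/2}u\|$ as you describe.
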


\begin{rem}
Here, due to the product rule, we are required to estimate quantities comparable to $(D^{s_c}|x|^{-b}) |u|^{2\sigma}u$ and to $|x|^{-b} |u|^{2\sigma}D^{s_c}u$. However, unlike the previous Lemma, since fractional derivatives are not localized in space, one cannot simply split the estimate on a ball and its exterior. Indeed, it is \textbf{not} expected that
\begin{equation}
    \|D^{s_c}(|x|^{-b}f)\|_{L_x^p(B_\pm^R)} \lesssim \|D^{s_c}|x|^{-b}\|_{L_x^{p_1}(B_\pm^R)}  \|f\|_{L^{q_1}} + \||x|^{-b}\|_{L_x^{p_2}(B_\pm^R)} \|D^s f\|_{L^{q_2}}
\end{equation}
for general $f$, even for suitable $p,p_j,q_j$, due to the non-local character of the fractional derivatives. A more appropriate version of the Leibniz rule, adapted to deal with this case, can be found in \cite{CCF_Arxiv_22}, and the nonlinear estimates there can be readily adapted to the proof of this Lemma. Nevertheless, we present another approach here, involving a well-applied localization, and a Gagliardo-Nirenberg inequality. Indeed, we let $\psi$ be a smooth, nonincreasing, radially symmetric function such that

\begin{equation}
    \psi(x) = \begin{cases}
        1, & |x| \leq 1\\
        0, & |x| \geq 2,
    \end{cases}
\end{equation}
and let $\psi_R (x) = \psi (x/R)$ for all $x$ and $R>0$. We then claim
\end{rem}
\begin{claim}\label{claim_x_b}
We have, for all $0\leq s \leq 1$,
\begin{equation}
    \|D^{s}(|x|^{-b}\psi_R)\|_{L^{\frac{3}{b+s+\varepsilon}}} \lesssim R^{\varepsilon},
\end{equation}
and
\begin{equation}
    \|D^{s}(|x|^{-b}(1-\psi_R))\|_{L^{\frac{3}{b+s-\varepsilon}}} \lesssim R^{-\varepsilon}.
\end{equation}
\end{claim}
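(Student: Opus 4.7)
The plan is to (i) rescale in order to reduce the claim to fixed-scale functions $g(x) = |x|^{-b}\psi(x)$ and $h(x) = |x|^{-b}(1-\psi(x))$, and then (ii) apply the Gagliardo-Nirenberg interpolation hinted at in the preceding remark, which passes from the fractional-derivative norm at intermediate level $s$ back to the two endpoints $s=0$ and $s=1$, where both estimates become direct integrations.

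\medskip

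For the scaling step I would use $|x|^{-b}\psi_R(x) = R^{-b}g(x/R)$ together with the $s$-homogeneity identity $D^s[f(\cdot/R)] = R^{-s}(D^sf)(\cdot/R)$ to obtain
\begin{equation*}
\|D^s(|x|^{-b}\psi_R)\|_{L^{p_s}} = R^{-b-s+3/p_s}\|D^s g\|_{L^{p_s}}.
\end{equation*}
The choice $p_s = 3/(b+s+\varepsilon)$ makes the exponent of $R$ equal to $\varepsilon$, reducing the first estimate of the claim to $\|D^s g\|_{L^{p_s}} \lesssim 1$. The same scaling handles the second estimate with $q_s = 3/(b+s-\varepsilon)$, reducing it to $\|D^s h\|_{L^{q_s}} \lesssim 1$.

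\medskip

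Next, I would note that $1/p_s = (1-s)/p_0 + s/p_1$ with $p_0 = 3/(b+\varepsilon)$ and $p_1 = 3/(b+1+\varepsilon)$, and similarly $1/q_s = (1-s)/q_0 + s/q_1$ with $q_0 = 3/(b-\varepsilon)$, $q_1 = 3/(b+1-\varepsilon)$. This is precisely the scaling condition for the Gagliardo-Nirenberg interpolation between $L^{p_0}$ and $\dot{H}^{1,p_1}$, yielding
\begin{equation*}
\|D^s g\|_{L^{p_s}} \lesssim \|g\|_{L^{p_0}}^{1-s}\|\nabla g\|_{L^{p_1}}^s, \qquad \|D^s h\|_{L^{q_s}} \lesssim \|h\|_{L^{q_0}}^{1-s}\|\nabla h\|_{L^{q_1}}^s,
\end{equation*}
valid provided (after possibly shrinking $\varepsilon$, in particular $\varepsilon < b$) all four exponents lie in $(1,\infty)$, which is an elementary range check using $0 < b < 3/2$.

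\medskip

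Finally, the four endpoint norms are direct computations. Since $g$ is supported in $|x|\leq 2$ with $|g(x)|\lesssim |x|^{-b}$ and $|\nabla g(x)|\lesssim |x|^{-b-1} + |x|^{-b}|\nabla\psi(x)|$, the integrals $\int_0^2 r^{2-bp_0}\,dr$ and $\int_0^2 r^{2-(b+1)p_1}\,dr$ are finite precisely because $p_0 < 3/b$ and $p_1 < 3/(b+1)$. For $h$, which is supported in $|x|\geq 1$ with $|h(x)| + |x||\nabla h(x)| \lesssim |x|^{-b}$ at infinity, the reverse inequalities $q_0 > 3/b$ and $q_1 > 3/(b+1)$ make $\int_1^\infty r^{2-bq_0}\,dr$ and $\int_1^\infty r^{2-(b+1)q_1}\,dr$ converge. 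The main delicate point is ensuring that the Gagliardo-Nirenberg inequality really applies at the precise endpoint exponents chosen here, but this boils down to the elementary exponent check above.
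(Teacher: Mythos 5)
Your proof is correct, but it takes a genuinely different route from the paper's. The paper disposes of the claim in one line via the Sobolev embedding $\|D^{s}F\|_{L^{3/(b+s+\varepsilon)}}\lesssim\|\nabla F\|_{L^{3/(b+1+\varepsilon)}}$ (and its analogue with $-\varepsilon$), so that only the $s=1$ endpoint is ever computed, and the factor $R^{\pm\varepsilon}$ is read off directly from the integral of $|x|^{-(b+1)\cdot 3/(b+1\pm\varepsilon)}$ over $\{|x|\leq 2R\}$, resp.\ $\{|x|\geq R\}$. You instead rescale to unit scale --- a clean way to isolate the power $R^{\pm\varepsilon}$, consistent with the homogeneity of the exponents chosen in the claim --- and then interpolate between the $s=0$ and $s=1$ endpoints via a fractional Gagliardo--Nirenberg inequality, which costs you two endpoint computations instead of one plus a heavier interpolation tool. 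Both arguments are valid; your exponent checks are all correct, including the observation that one must take $\varepsilon<b$ (implicit in the paper, since $\varepsilon$ is taken sufficiently small throughout). One caveat: your ``main delicate point'' is slightly misplaced. The delicate input is not the exponent arithmetic but the inequality $\|D^{s}g\|_{L^{p_s}}\lesssim\|g\|_{L^{p_0}}^{1-s}\|\nabla g\|_{L^{p_1}}^{s}$ itself; it does hold for $1<p_0,p_1<\infty$ (e.g.\ by complex interpolation of homogeneous Triebel--Lizorkin spaces, or Littlewood--Paley arguments), but such interpolation inequalities have known borderline failures when an endpoint exponent equals $1$, so this step deserves a precise citation rather than being treated as routine. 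The paper's single Sobolev embedding sidesteps this entirely.
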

\begin{proof}[Proof of the Claim]

We use Sobolev embedding to deduce
\begin{equation}
    \|D^{s}(|x|^{-b}\psi_R)\|_{L^{\frac{3}{b+s+\varepsilon}}}\lesssim  \|\nabla(|x|^{-b}\psi_R)\|_{L^{\frac{3}{b+1+\varepsilon}}} \lesssim R^{\varepsilon},
\end{equation}
since $|\nabla |x|^{-b}||\psi_R| = |x|^{-b-1}|\psi_R|$ and since $|x|^{-b}|\nabla \psi_R| \lesssim R^{-b-1}$ and $\nabla \psi_R$ is supported on $R \leq |x| \leq 2R$. Similar calculations give the estimate for the term with $(1-\psi_R)$.
\end{proof}

\begin{proof}[Proof of Lemma \ref{est2}]
%

As in the proof of Lemma \ref{est0}, consider the $L^2$-admissible pairs $(q,p)$, $(\beta_+, \alpha_+)$ and $(\beta_-, \alpha_-)$ given by \eqref{palpha}. From relation \eqref{rel1}, we also have
\begin{eqnarray}
\frac{1}{p'}&=&\frac{b+s_c\pm\varepsilon}{3}+2\sigma\left(\frac{1}{\alpha_{\pm}}-\frac{1}{3}\right)+\left(\frac{1}{\alpha_{\pm}}-\frac{s_c}{3}\right).
\end{eqnarray}
The Lemma \ref{leibniz} (Leibniz and Chain rules), H\"older and Sobolev inequalities yield
\begin{align}
    \|D^{s_c}(|x|^{-b}|u|^{2\sigma}u)\|_{L_x^{p'}}
    &\lesssim \|D^{s_c}(|x|^{-b}\psi_R)\|_{L_x^{\frac{3}{b+s_c+\varepsilon}}} \|\nabla u\|^{2\sigma}_{L_x^{\alpha_+}}\|D^{s_c}u\|_{L_x^{\alpha_+}} \\
    &\quad+ \||x|^{-b}\psi_R\|_{L_x^{\frac{3}{b+\varepsilon}}} \|\nabla u\|^{2\sigma}_{L_x^{\alpha_+}}\|D^{s_c}u\|_{L_x^{\alpha_+}}\\
    &\quad +\|D^{s_c}(|x|^{-b}(1-\psi_R))\|_{L_x^{\frac{3}{b+s_c-\varepsilon}}} \|\nabla u\|^{2\sigma}_{L_x^{\alpha_-}}\|D^{s_c}u\|_{L_x^{\alpha_-}}\\
    &\quad+ \||x|^{-b}(1-\psi_R)\|_{L_x^{\frac{3}{b-\varepsilon}}} \|\nabla u\|^{2\sigma}_{L_x^{\alpha_-}}\|D^{s_c}u\|_{L_x^{\alpha_-}}.
\end{align}
Next, from Claim \ref{claim_x_b}, we get
$$
\|D^{s_c}(|x|^{-b}|u|^{2\sigma}u)\|_{L_x^{p'}} \lesssim R^{\varepsilon}\|\nabla u\|^{2\sigma}_{L_x^{\alpha_+}}\|D^{s_c}u\|_{L_x^{\alpha_+}} +
    R^{-\varepsilon}\|\nabla u\|^{2\sigma}_{L_x^{\alpha_-}}\|D^{s_c}u\|_{L_x^{\alpha_-}}.
$$

Finally, using the relation \eqref{rel2}, the H\"older inequality in the time variable and the equivalence of Sobolev norms imply
\begin{equation}
\|\mathcal L^\frac{s_c}{2}_a(|x|^{-b}|u|^{2\sigma}u)\|_{L^{q'}_TL^{p'}_x} \lesssim \left(T^{\theta_{+}}R^{\varepsilon}+T^{\theta_{-}}R^{-\varepsilon}\right)\|\mathcal L^\frac12_a u\|_{S(L^2;I)}^{2\sigma}\|\mathcal L^\frac{s_c}{2}_a u\|_{S(L^2;I)}
\end{equation}
and the choice $R = T^{\frac12}$ finishes the proof.
\end{proof}

\begin{lemma}\label{est1} Let $a>0$, $0<b<2$, $\frac{2-b}{3}<\sigma<2-b$. Then, there exists $\delta \in (0,1)$ such that, for $I=[-T,T]$ we have
	\begin{equation}
		\left\|\mathcal |x|^{-b}|u|^{2\sigma}v \right\|_{S'(\dot H^{-s_c};I)} \lesssim \left[\left(T^{\frac{1-s_c}{2}}\|\mathcal L_a^\frac{1}{2} u\|_{S(L^2;I)}\right)^{1-\delta}\left(\|\mathcal L_a^\frac{s_c}{2} u\|_{S(L^2;I)}\right)^\delta\right]^{2\sigma}\|v\|_{S(\dot H^{s_c}; I)}.
	\end{equation}

\end{lemma}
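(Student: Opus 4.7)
The plan is to follow the same architecture as Lemmas \ref{est0} and \ref{est2}, with two adjustments: the dual norm on the left is now $S'(\dot H^{-s_c};I)$ instead of $S'(L^2;I)$, so I will pair against $v$ through an $\dot H^{s_c}$-admissible pair, and the $2\sigma$ factors of $u$ will need to be interpolated between $\|\mathcal L_a^{1/2}u\|_{S(L^2;I)}$ and $\|\mathcal L_a^{s_c/2}u\|_{S(L^2;I)}$ rather than controlled by a single derivative norm. As before, the space-time will be split using the cutoff $\psi_R$ from Claim \ref{claim_x_b}, and the optimization $R=T^{1/2}$ will merge the two contributions.

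First I fix an $\dot H^{-s_c}$-admissible pair $(q,p)\in\mathcal A_{-s_c}$ and an $\dot H^{s_c}$-admissible pair $(\tilde q,\tilde p)\in\mathcal A_{s_c}$, write $|x|^{-b}=|x|^{-b}\psi_R+|x|^{-b}(1-\psi_R)$, and apply Hölder in space to each piece, placing the weight in $L^{3/(b\pm\varepsilon)}$ (which, by the same computation as in Claim \ref{claim_x_b}, gives $R^{\pm\varepsilon}$), $v$ in $L^{\tilde p}_x$, and the $2\sigma$ factors of $u$ in $L^{\alpha_\pm}_x$, where $\alpha_\pm$ is determined by
$$\frac{1}{p'}=\frac{b\pm\varepsilon}{3}+\frac{2\sigma}{\alpha_\pm}+\frac{1}{\tilde p}.$$
Then I pick $L^2$-admissible pairs $(\beta_\pm,r_\pm)$ and use Sobolev embedding together with Lemma \ref{EquivHs} to interpolate
$$\|u\|_{L^{\alpha_\pm}_x}\lesssim \|u\|_{L^{r^{**}_\pm}_x}^{1-\delta}\|u\|_{L^{r^{*}_\pm}_x}^{\delta}\lesssim \|\mathcal L_a^{1/2}u\|_{L^{r_\pm}_x}^{1-\delta}\|\mathcal L_a^{s_c/2}u\|_{L^{r_\pm}_x}^{\delta},$$
with $1/r^{**}_\pm=1/r_\pm-1/3$ and $1/r^{*}_\pm=1/r_\pm-s_c/3$, and $\delta\in(0,1)$ chosen so that $1/\alpha_\pm=(1-\delta)/r^{**}_\pm+\delta/r^{*}_\pm$ closes against the previous identity.

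Finally, a Hölder step in time with the identity
$$\frac{1}{q'}=\theta_\pm+\frac{2\sigma}{\beta_\pm}+\frac{1}{\tilde q},\qquad \theta_\pm=\sigma(1-s_c)(1-\delta)\mp\tfrac{\varepsilon}{2},$$
produces a factor $T^{\theta_\pm}$, and the choice $R=T^{1/2}$ symmetrizes the two pieces into a single $T^{\sigma(1-s_c)(1-\delta)}$. Bounding $\|v\|_{L^{\tilde q}_TL^{\tilde p}_x}\leq\|v\|_{S(\dot H^{s_c};I)}$ and taking the infimum over $\mathcal A_{-s_c}$ on the left then gives exactly the claimed inequality. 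The main obstacle is bookkeeping: one has to simultaneously enforce the $\dot H^{\pm s_c}$-admissibility of $(q,p)$ and $(\tilde q,\tilde p)$, the $L^2$-admissibility of $(\beta_\pm,r_\pm)$, the Sobolev ranges $r^{**}_\pm,r^{*}_\pm<\infty$ and $r_\pm<3/s_c$ required by Lemma \ref{EquivHs}, the endpoint restrictions in Definition \ref{As}, and the compatibility $\delta\in(0,1)$. Taking $\varepsilon$ small and $r_\pm,\tilde p$ slightly off the endpoints of $\mathcal A_0$ and $\mathcal A_{s_c}$ (much as in \eqref{palpha}) should keep all these constraints compatible throughout the intercritical range $(2-b)/3<\sigma<2-b$.
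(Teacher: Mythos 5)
Your proposal is correct and follows essentially the same route as the paper: split near/far from the origin to gain $R^{\pm\varepsilon}$, apply H\"older plus Sobolev against an $\dot H^{s_c}$-admissible norm of $v$, interpolate the $2\sigma$ factors of $u$ between $\dot H^{1}$ and $\dot H^{s_c}$ (the paper does this by writing $\|D^s u\|_{L^{\mu_\pm}}\lesssim\|Du\|^{1-\delta}\|D^{s_c}u\|^{\delta}$ for an intermediate $s\in(s_c,1)$ with $\delta=(1-s)/(1-s_c)$, which is equivalent to your Lebesgue-norm interpolation of $\|u\|_{L^{\alpha_\pm}}$), and set $R=T^{1/2}$. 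The exponent feasibility you defer to the end is where the paper spends its effort -- it fixes $s$ with $\sigma<(1-s_c)/(s-s_c)$ and exhibits an explicit window for the common spatial exponent $\ell$ and for $\mu_\pm$ -- but your identities are consistent, and using the sharp ball decomposition instead of $\psi_R$ is fine here since no derivative falls on the weight.
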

\begin{proof}
For a fixed $\sigma>0$, take $s\in (s_c,1)$ such that
\begin{equation}\label{sigma_cond}
\sigma<\frac{1-s_c}{s-s_c}.
\end{equation}
So, since $2\sigma(s-s_c)+1<3-2s_c$, we can find $\ell$ satisfying
$$\max\left\{\frac{6}{3-2s_c},\frac{6}{2\sigma(s-s_c)+1+\sigma-\varepsilon}\right\}< \ell<\frac{6}{2\sigma(s-s_c)+1+\varepsilon},
$$	
for some $0<\varepsilon\ll 1$, such that $(m,\ell)$ is $\dot H^{-s_c}$-admissible and $(\tau,\ell)$ is $\dot H^{s_c}$-admissible. Also consider the pairs $(\nu_\pm,\mu_\pm)$ such that
\begin{align}
	\mu_{\pm}=\frac{6\sigma \ell}{\ell(3+2\sigma s-b\mp\varepsilon)-6}.
\end{align}
It is straightforward to check that $2<\mu_\pm<3$, so we can define the $L^2$-admissible pairs $(\nu_-,\mu_-)$ and $(\nu_+,\mu_+)$. Now, setting 
$$
\zeta_\pm=\sigma(s-s_c)\mp\frac{\varepsilon}{2},
$$ 
we get
\begin{align}
	\frac{1}{\ell'}=\frac{b\pm\varepsilon}{3}+2\sigma\left(\frac{1}{\mu_\pm}-\frac{s}{3}\right)+\frac{1}{\ell}
\,\,\,\,
\mbox{and}
\,\,\,\,
		\frac{1}{m'}=\zeta_\pm+\frac{2\sigma}{\nu_\pm}+\frac1\tau.
\end{align}
Then, applying H\"older's inequality and Sobolev embedding we have, using the same notation of the previous lemmas, that
\begin{align}
	\left\||x|^{-b}|u|^{2\sigma}v\right\|_{L^{m'}_TL^{\ell'}_x(B^R_{\pm})}
	&\lesssim T^{\zeta_{\pm}} R^{\pm \varepsilon}\|D^s u\|_{L^{\nu_{\pm}}_TL^{\mu_{\pm}}_x}^{2\sigma}\|v\|_{L^{\tau}_TL^{\ell}_x}.
\end{align}
Moreover, by interpolation and the equivalence of Sobolev norms \eqref{equiv0} we obtain
\begin{align}
\|D^s u\|_{L^{\nu_{\pm}}_TL^{\mu_{\pm}}_x}&\lesssim \|D u\|^{1-\delta}_{L^{\nu_{\pm}}_TL^{\mu_{\pm}}_x}\|D^{s_c} u\|^{\delta}_{L^{\nu_{\pm}}_TL^{\mu_{\pm}}_x}\\
&\lesssim \|\mathcal L^{\frac{1}{2}}_a u\|_{L^{\nu_{\pm}}_TL^{\mu_{\pm}}_x}^{1-
\delta}\|\mathcal L^{\frac{s_c}{2}}_a u\|_{L^{\nu_{\pm}}_TL^{\mu_{\pm}}_x}^{\delta},
\end{align}
for $\delta = (1-s)/(1-s_c)\in (0,1)$. Collecting the previous two estimates, we obtain
\begin{align}
\left\||x|^{-b}|u|^{2\sigma}v\right\|_{L^{m'}_TL^{\ell'}_x}
	&\lesssim  \left(T^{\zeta_{+}}R^{\varepsilon}+T^{\zeta_{-}}R^{-\varepsilon}\right)\left(\|\mathcal L_a^\frac{1}{2} u\|_{S(L^2;I)}^{1-\delta}\|\mathcal L_a^\frac{s_c}{2} u\|_{S(L^2;I)}^{\delta}\right)^{2\sigma}\|v\|_{S(\dot H^{s_c}; I)}.
\end{align}

Again setting $R=T^{\frac12}$, we find
\begin{equation}\label{T_cond}
T^{\zeta_{+}}R^{\varepsilon}+T^{\zeta_{-}}R^{-\varepsilon}=T^{\sigma(s-s_c)} = T^{\sigma(1-\delta)(1-s_c)},
\end{equation}
completing the proof.

\end{proof}


\subsection{Local theory}
Next, we use the above nonlinear estimates to obtain the local well-posedness in $ \dot{H_a^{s_c}}(\Real^3)\cap \dot H_a^1(\Real^3) $.

\begin{proof}[Proof of Theorem \ref{LWP}]	
	Let
	$$
	X=  \left( \bigcap_{(q,p)\in \mathcal{A}_0}L^q\left([-T,T];\dot{H}^{s_c,p}_a\cap \dot{H}^{1,p}_a\right)\right)\bigcap \left( \bigcap_{(\overline{q},\overline{p})\in \mathcal{A}_{s_c}}L^{\overline{q}}\left([-T,T]; L^{\overline p} \right)\right)$$ and 
	\begin{equation*}\label{NHs} 
		\|u\|_T=\|\mathcal L_a^{\frac12} u\|_{S\left(L^2;I\right)}+\|\mathcal L_a^{\frac{s_c}{2}} u\|_{S\left(L^2;I\right)}+\|u\|_{S\left(\dot H^{s_c};I\right)},
	\end{equation*}
	where $I=[-T,T]$.
	
	For {$m>1$} and $T>0$ define the set
	\begin{equation*}
		S(m,T)=\{u \in X : \|u\|_T\leq m \}
	\end{equation*}
	with the metric 
	$$
	d_T(u,v)=\|u-v\|_{S\left(\dot H^{s_c};I\right)}.
	$$
	
	By standards arguments $(S(m,T), d_T)$ is a complete metric space (see Cazenave \cite[Theorem 4.4.1]{cazenave} and also \cite[Appendix A]{CFG20}). We shall show that $G=G_{u_0}$ defined by 
	\begin{equation}\label{OPERATOR} 
	G(u)(t)=e^{-it\mathcal L_a}u_0+i\lambda \int_0^t e^{-i(t-s)\mathcal L_a}\left(|x|^{-b}|u(s)|^{2\sigma} u(s)\right)ds,
	\end{equation}
has a fixed point in $S(m,T)$ for a suitable choice of $m$ and $T$. Indeed, it follows from the Strichartz inequalities in Lemma \ref{Lemma-Str1}, and also, from Lemmas \ref{est0} and \ref{est2}, that, for some $c>1$,
	\begin{align}\label{esa}
		\|\mathcal L_a^{\frac12}G(u)\|_{S(L^2;I)}
		&\leq c \|\mathcal L_a^\frac12 u_0\|_{L^2}+cT^{\sigma(1-s_c)} \|\mathcal L_a^\frac12 u\|^{2\sigma+1}_{S(L^2;I)},	
\end{align}
and
\begin{align}\label{esa0}
		\|\mathcal L_a^{\frac{s_c}{2}}G(u)\|_{S(L^2;I)}
		&\leq c \|\mathcal L_a^\frac{s_c}{2} u_0\|_{L^2}+cT^{\sigma(1-s_c)}\|\mathcal L_a^\frac12 u\|_{S(L^2;I)}^{2\sigma}\|\mathcal L_a^\frac{s_c}{2} u\|_{S(L^2;I)}.
	\end{align}
Moreover, from Lemma \ref{Lemma-Str2} and Lemma \ref{est1}, we also have
$$
\|G(u)\|_{S\left(\dot H^{s_c};I\right)}\leq c \|\mathcal L_a^\frac{s_c}{2} u_0\|_{L^2}+c\left[\left(T^{\frac{1-s_c}{2}}\|\mathcal L_a^\frac{1}{2} u\|_{S(L^2;I)}\right)^{1-\delta}\left(\|\mathcal L_a^\frac{s_c}{2} u\|_{S(L^2;I)}\right)^\delta\right]^{2\sigma}\|u\|_{S(\dot H^{s_c}; I)}.	
$$


Now, by fixing $m\geq \max\{1, 2c\|u_0\|_{\dot{H}^{s_c}_a\cap \dot{H}^1_a }\}$ and choosing $T\in (0,1)$ such that 
	\begin{equation}\label{CTHs} 
		T^{\sigma(1-s_c)} < \left(\frac{1}{4c m^{2\sigma}}\right)^{\frac{1}{1-\delta}},
	\end{equation}
we obtain that $G(u)\in S(m,T)$, if $u \in S(m,T)$ and therefore $G$ is well defined.
	To prove that $G$ is a contraction we first recall the elementary inequality
	\begin{equation}\label{nonlinearity}
		\left||x|^{-b}|u|^{2\sigma} u-|x|^{-b}|v|^{2\sigma} v\right|\lesssim |x|^{-b}\left( |u|^{2\sigma}+ |v|^{2\sigma} \right)|u-v|.
	\end{equation}
	Then, again from Lemma \ref{Lemma-Str2} and Lemma \ref{est1}, we get for $u,v\in S(m,T)$ that

	\begin{align*}
d_T(G(u),G(v)) \leq 2c T^{\sigma(1-s_c)(1-\delta)}m^{2\sigma} d_T(u,v).
	\end{align*}
	
Therefore, for $T\in (0,1)$ satisfying \eqref{CTHs}, $G$ is a contraction on $S(m,T)$. Finally, by the contraction mapping principle we have a unique $u \in S(m,T)$ such that $G(u)=u$ and the proof is completed.

\end{proof}

\subsection{A Gagliardo-Nirenberg type inequality and Blow-up alternative}
We start recalling the celebrated Caffareli-Khon-Nirenberg inequality in $\mathbb{R}^3$.
\begin{prop}[Caffareli-Khon-Nirenberg inequality]\label{GNcrit2} There exists $C>0$ such that the following inequality holds for all $f \in \dot H^1(\mathbb{R}^3)\cap L^p(\Real^3)$
	\begin{equation}\label{GNp}
		\int |x|^{-b}|f|^{2\sigma+2} \, dx\leq C \|\nabla f\|^\theta_{L^2}\|f\|^{2\sigma+2-\theta}_{L^{p}}.
	\end{equation}
	if and only, if the following relations hold
	\begin{align}
		\frac{3-b}{3}=\frac{\theta}{6}+\frac{2\sigma+2-\theta}{p}
	\end{align} 
	where $b<3$, $p\geq 1$ and $0<\theta<2\sigma+2$.
\end{prop}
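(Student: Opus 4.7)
My plan is to prove the two directions separately. The necessity follows from a standard scaling argument: substituting $f_\lambda(x)=f(\lambda x)$ into \eqref{GNp}, the left-hand side transforms as $\lambda^{b-3}$ times the original LHS, while the right-hand side picks up $\lambda^{-\theta/2-3(2\sigma+2-\theta)/p}$ times the original RHS. For the inequality to survive letting $\lambda\to 0$ and $\lambda\to\infty$, these exponents of $\lambda$ must coincide, which after rearrangement yields exactly the scaling identity $\frac{3-b}{3}=\frac{\theta}{6}+\frac{2\sigma+2-\theta}{p}$.

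For sufficiency, I would assume the scaling identity and apply H\"older's inequality in the form
$$
\int |x|^{-b}|f|^{2\sigma+2}\,dx = \int |f|^{2\sigma+2-\theta}\cdot\frac{|f|^\theta}{|x|^b}\,dx \leq \|f\|^{2\sigma+2-\theta}_{L^p}\left\|\frac{|f|^\theta}{|x|^b}\right\|_{L^{r}},
$$
where $r=p/(p-2\sigma-2+\theta)$ is the H\"older conjugate. The second factor equals $\bigl(\int |x|^{-br}|f|^{\theta r}\,dx\bigr)^{1/r}$, which I would bound using the three-dimensional Hardy--Sobolev inequality $\bigl(\int |x|^{-s}|g|^q\,dx\bigr)^{1/q}\leq C\|\nabla g\|_{L^2}$, valid for $0\leq s\leq 2$ with $q=2(3-s)$. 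Plugging in $s=br$ and $q=\theta r$, the scaling identity assumed in the statement translates precisely into the Hardy--Sobolev scaling relation $q=2(3-s)$, so the inequality applies and produces the factor $\|\nabla f\|^\theta_{L^2}$. Multiplying the two contributions yields \eqref{GNp}.

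The main obstacle is verifying that the Hardy--Sobolev exponent $s=br$ actually lies in $[0,2]$ throughout the admissible parameter range, together with $r>0$ (i.e., $p>2\sigma+2-\theta$). If any corner of parameter space violates these constraints, I would fall back on a more robust splitting argument: decompose $\mathbb{R}^3=B_R\cup B_R^c$, on $B_R$ use H\"older pairing $|x|^{-b}\in L^{q_0}(B_R)$ (which only requires $q_0 b<3$) against the Sobolev embedding $\dot H^1\hookrightarrow L^6$, on $B_R^c$ use the trivial bound $|x|^{-b}\leq R^{-b}$ together with H\"older and the $L^p$ norm, and then optimize in $R$. The scaling relation guarantees that the resulting powers of $R$ in the two pieces have opposite signs, so the optimization in $R$ balances them and produces exactly the claimed product $\|\nabla f\|^\theta_{L^2}\|f\|^{2\sigma+2-\theta}_{L^p}$.
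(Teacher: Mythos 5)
Your argument is essentially correct, but it takes a genuinely different route from the paper: the paper offers no proof at all, simply citing this as the three-dimensional case of the main theorem of Caffarelli--Kohn--Nirenberg \cite{CKN84}, whereas you give a self-contained derivation. Your scaling argument for necessity is the standard one and is fine. For sufficiency, the H\"older-plus-Hardy--Sobolev computation is clean and the bookkeeping checks out: the balance condition forces $r=6/(2b+\theta)$, so $q=\theta r$ and $s=br$ automatically satisfy $q=2(3-s)$, and the constraint $s\le 2$ reduces to $b\le\theta$ (with $r\ge 1$ reducing to $2b+\theta\le 6$). It is worth recording that both instances actually used in the paper --- $p=2$ with $\theta=2\sigma s_c+2$ for \eqref{GNF}, and $p=\sigma_c$ with $\theta=2$ for \eqref{GNcrit} --- satisfy these constraints under the standing assumptions $0<b<3/2$ and $\sigma<2-b$, so your direct argument fully covers what the paper needs. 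What the citation buys, and what your proof does not deliver without further work, is the full generality of the stated ``if and only if'': your fallback splitting argument is only sketched (the interior piece still needs an interpolation of $L^{(2\sigma+2)q_0'}$ between $L^p$ and $L^6$, which imposes its own ordering conditions on the exponents), and both your main argument and the fallback implicitly assume $b\ge 0$, whereas the proposition as written allows any $b<3$; for $b<0$ the weight grows at infinity and neither Hardy--Sobolev nor the bound $|x|^{-b}\le R^{-b}$ on $B_R^c$ is available, so one genuinely needs the CKN machinery there. In short: a correct and more informative proof on the parameter range that matters, at the cost of not quite reaching the stated generality.
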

\begin{proof}
	This is the 3D version of the main result in Caffareli, Khon and Nirenberg \cite{CKN84}.
\end{proof}
\begin{rem}We note that:

	\begin{itemize}
		\item[1.] Taking $p=2$ in the inequality \eqref{GNp}, we have the well-known Gagliardo-Nirenberg inequality
		\begin{align}\label{GNF}
			\int|x|^{-b}|f|^{2\sigma+2}\,dx\leq C\|\nabla f\|_{L^2}^{2\sigma s_c+2}\|f\|_{L^2}^{2\sigma(1-s_c)},
		\end{align}
		where the best constant was established by the third author in \cite{Farah}.
		\item[2.] For $p=\sigma_c=\frac{2N\sigma}{2-b}$, the inequality \eqref{GNp} is given by	\begin{equation}\label{GNcrit}
			\int |x|^{-b}|f|^{2\sigma+2} \, dx\leq C \|\nabla f\|^2_{L^2}\|f\|^{2\sigma}_{L^{\sigma_c}}.
		\end{equation}
		In \cite{CFG20}, joint with Guzm\'an, the sharp constant for above inequality was obtained.
		\item[3.] Since $a>0$, we have that (see relation \eqref{equiv})
		$$\|\mathcal L_a^{\frac12} u\|_{L^2}\sim \|\nabla u\|_{L^2}.$$
		Thus, it follows from inequality \eqref{GNp} that there exists an optimal constant $K_{a}>0$ such that 
		\begin{equation}\label{GNacrit2}
			\int |x|^{-b}|f|^{2\sigma+2} \, dx\leq \frac{\sigma+1}{K_a^{2\sigma}} \|\mathcal L_a^{\frac12}
 f\|^2_{L^2}\|f\|^{2\sigma}_{L^{\sigma_c}}.
		\end{equation}
	\end{itemize}
We will refer to the constant $K_{a}$ later in this work (see Proposition \ref{prop1}).
\end{rem}
Next, we prove a blow-up alternative which follows as a consequence of the local theory obtained in Theorem \ref{LWP}.
\begin{prop}[Blow-up alternative]\label{Blowalt}
	Let $a>0$, $0<b<\frac32$ and $\frac{2-b}{3}<\sigma<2-b$. If $u$ is a solution to the IVP \eqref{PVI} in $\dot{H_a^{s_c}}(\Real^3)\cap \dot H_a^1(\Real^3)$ with finite maximal positive time of existence $0<T^*<+\infty$, then there exists a constant $c>0$ such that
\begin{align}\label{BUA2}
		\|\mathcal L_a^\frac12 u(t)\|_{L^2}\geq \frac{c}{(T^*-t)^{\frac{1-s_c}{2}}}, \,\,\, \textit{for all} \,\,\, t\in [0,T^*).
\end{align}
\end{prop}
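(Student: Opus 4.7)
The plan is to proceed by contradiction, exploiting the fact that Lemma \ref{est0} closes on the Strichartz norm $\|\mathcal{L}_a^{1/2}u\|_{S(L^2)}$ alone. Concretely, fix a small universal $\epsilon>0$ and suppose, for contradiction, that there exists $t_0\in[0,T^\ast)$ with
\[
(T^\ast-t_0)^{(1-s_c)/2}\|\mathcal{L}_a^{1/2} u(t_0)\|_{L^2} < \epsilon.
\]
The strategy is to show that both $\|\mathcal{L}_a^{1/2}u(t)\|_{L^2}$ and $\|\mathcal{L}_a^{s_c/2}u(t)\|_{L^2}$ remain uniformly bounded on $[t_0,T^\ast)$, whence Theorem \ref{LWP} applied at a time sufficiently close to $T^\ast$ will extend the solution past $T^\ast$, contradicting maximality of the lifespan.

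First, for the $\dot H_a^1$ bound, apply the Duhamel representation of $u$ on $[t_0,t_0+T]$ with $T<T^\ast-t_0$; Lemma \ref{Lemma-Str1} and Lemma \ref{est0} together give
\[
\|\mathcal{L}_a^{1/2}u\|_{S(L^2;[t_0,t_0+T])} \leq c\|\mathcal{L}_a^{1/2}u(t_0)\|_{L^2} + cT^{\sigma(1-s_c)}\|\mathcal{L}_a^{1/2}u\|^{2\sigma+1}_{S(L^2;[t_0,t_0+T])}.
\]
Since the left-hand side is monotone in $T$ and has $L^\infty_t L^2_x$ limit $\|\mathcal{L}_a^{1/2}u(t_0)\|_{L^2}$ as $T\to 0^+$, a standard bootstrap yields $\|\mathcal{L}_a^{1/2}u\|_{S(L^2;[t_0,t_0+T])}\leq 2c\|\mathcal{L}_a^{1/2}u(t_0)\|_{L^2}$ as long as $T^{\sigma(1-s_c)}\|\mathcal{L}_a^{1/2}u(t_0)\|_{L^2}^{2\sigma}\leq\eta$ for a small universal $\eta$; choosing $\epsilon^{2\sigma}<\eta$, this persists up to $T=T^\ast-t_0$. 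Next, Lemmas \ref{Lemma-Str1} and \ref{est2} similarly produce
\[
\|\mathcal{L}_a^{s_c/2}u\|_{S(L^2;[t_0,t_0+T])} \leq c\|\mathcal{L}_a^{s_c/2}u(t_0)\|_{L^2} + cT^{\sigma(1-s_c)}\|\mathcal{L}_a^{1/2}u\|^{2\sigma}_{S(L^2)}\|\mathcal{L}_a^{s_c/2}u\|_{S(L^2)},
\]
which is linear in $\|\mathcal{L}_a^{s_c/2}u\|_{S(L^2)}$. Substituting the previous bound, the coefficient of the last factor is dominated by a constant multiple of $\epsilon^{2\sigma}$, hence below $1/2$ for $\epsilon$ small, and a second bootstrap yields $\|\mathcal{L}_a^{s_c/2}u\|_{S(L^2;[t_0,t_0+T])}\leq 2c\|\mathcal{L}_a^{s_c/2}u(t_0)\|_{L^2}$ for all $T<T^\ast-t_0$.

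Reading off the $L^\infty_t L^2_x$ endpoint of each Strichartz norm gives $\sup_{t\in[t_0,T^\ast)}\|u(t)\|_{\dot H_a^{s_c}\cap \dot H_a^1}<\infty$, and Theorem \ref{LWP} applied at any $t$ close enough to $T^\ast$ extends the solution past $T^\ast$, producing the desired contradiction. The main subtlety is to ensure that the smallness condition on $T$ truly decouples from $\|\mathcal{L}_a^{s_c/2}u(t_0)\|_{L^2}$; this is precisely why the refined nonlinear estimates with sharp $T$-exponents matter, as Lemma \ref{est0} involves only $\mathcal{L}_a^{1/2}u$ on the right-hand side, and Lemma \ref{est2} is linear in $\mathcal{L}_a^{s_c/2}u$ with a prefactor depending only on $\mathcal{L}_a^{1/2}u$. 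Note that one does not need to bound $\|u\|_{S(\dot H^{s_c})}$ here, as Theorem \ref{LWP} is invoked as a black box once the two pointwise-in-time homogeneous Sobolev norms are known to be finite.
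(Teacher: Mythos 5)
Your proposal is correct and follows essentially the same route as the paper: negate the bound at a single time (the paper uses a sequence $t_n$ with threshold $1/n$, which is equivalent), run a continuity/bootstrap argument on $\|\mathcal L_a^{1/2}u\|_{S(L^2)}$ via Lemma \ref{est0} using that the smallness $T^{\sigma(1-s_c)}\|\mathcal L_a^{1/2}u(t_0)\|_{L^2}^{2\sigma}<\epsilon^{2\sigma}$ is exactly what the sharp $T$-exponent provides, then absorb the linear $\|\mathcal L_a^{s_c/2}u\|$ term from Lemma \ref{est2}, and conclude by contradiction with the local theory. The only cosmetic difference is that the paper bootstraps a restricted Strichartz quantity $X_n(t)$ built from the specific pairs $(\beta_\pm,\alpha_\pm)$ rather than the full $S(L^2)$ norm, which changes nothing.
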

\begin{proof} 


Assume, 
by contradiction, that there exists a sequence $t_n\nearrow T^{\ast}$ such that, for all $n$,
\begin{equation}\label{bound_0}
    (T^*-t_n)^{\frac{1-s_c}{2}}\|\mathcal L_a^\frac12 u(t_n)\|_{L^2} < \frac{1}{n}.
\end{equation}

Using the same notation as in the proof of Lemma \ref{est0}, we define
\begin{equation}
    X_n(t) =\|\mathcal L_a^{\frac12}u\|_{L^{\beta_+}_{[t_n,t]}L^{\alpha_+}_x} + \|\mathcal L_a^{\frac12}u\|_{L^{\beta_-}_{[t_n,t]}L^{\alpha_-}_x}.
\end{equation}
From the Duhamel formula and estimate \eqref{bound_0}, for $t \in [t_n, T^*)$, there exists $C_0>0$ such that
\begin{equation}\label{ContArg}
    \|\mathcal L_a^{\frac12}u\|_{L^{\infty}_{[t_n,t]}L^{2}_x} + 
   X_n(t)\leq  
    C_0\|\mathcal L_a^\frac12 u(t_n)\|_{L^2}  + \frac{C_0}{(n \|\mathcal L_a^\frac12 u(t_n)\|_{L^2})^{2\sigma}} [X_n(t)]^{2\sigma+1}.
\end{equation}

Consider the function $f(x)=x-C_0\|\mathcal L_a^\frac12 u(t_n)\|_{L^2} - \frac{C_0}{(n \|\mathcal L_a^\frac12 u(t_n)\|_{L^2})^{2\sigma}}  x^{2\sigma+1}$. A simple computation revels that it has a global maximum at $x_n^{\ast}=\frac{n\|\mathcal L_a^\frac12 u(t_n)\|_{L^2}}{[C_0(2\sigma+1)]^{1/2\sigma}}$, $f(x_n^{\ast})=\left(\frac{2\sigma n}{(2\sigma+1)[C_0(2\sigma+1)]^{1/2\sigma}}-C_0\right)\|\mathcal L_a^\frac12 u(t_n)\|_{L^2}$ and $f(0)<0$. Thus, for $n_0> \frac{\left[C_0(2\sigma+1)\right]^{\frac{2\sigma+1}{2\sigma}}}{2\sigma}$ we have that $f(x_{n_0}^{\ast})>0$.

Now, from \eqref{ContArg} we have that $f(X_{n_0}(t))\leq 0$, for all $t \in [t_{n_0}, T^*)$. A continuity argument then implies that $X_{n_0}(t)\leq x_{n_0}^{\ast}$ and therefore, from inequality \eqref{ContArg}, we deduce

\begin{eqnarray}\label{bound_1}
\|\mathcal L_a^{\frac12}u\|_{L^{\infty}_{[t_{n_0},T^*)}L^2_x} 
&\leq&  C_0\|\mathcal L_a^\frac12 u(t_{n_0})\|_{L^2}+ \frac{C_0}{(n_0 \|\mathcal L_a^\frac12 u(t_{n_0})\|_{L^2})^{2\sigma}} (x_{n_0}^*)^{2\sigma+1}\\
& =& \left(1+\frac{n_0}{[C_0(2\sigma+1)]^{(2\sigma+1)/2\sigma}} \right)C_0\|\mathcal L_a^\frac12 u(t_{n_0})\|_{L^2},
\end{eqnarray}
which implies 
$\|\mathcal{L}_a^{\frac{1}{2}}u\|_{L^{\infty}_{[0,T^*)}L^2_x} <\infty$, by continuity. 

On the other hand, the bound $X_{n_0}(t)\leq x_{n_0}^{\ast}$ and assumption \eqref{bound_0} also implies
\begin{equation}
    (t - t_{n_0})^{\frac{1-s_c}{2}}X_{n_0}(t)< \frac{1}{[C_0(2\sigma+1)]^{1/2\sigma}}, \,\,\mbox{for all}\,\, t\in [0,T^*).
\end{equation}
Moreover, again from Duhamel and the proof of estimate \eqref{esa0}, we can write, for $t_{n_0}\leq t < T^*$,

\begin{align}
   \|\mathcal L_a^{\frac{s_c}2}u\|_{S(L^2;[t_{n_0},t])} &\leq  
    C_0\|\mathcal L_a^\frac{s_c}2 u(t_{n_0})\|_{L^2}  + C_0 \left[(t-t_{n_0})^{\frac{1-s_c}{2}}X_{n_0}(t)\right]^{2\sigma}\|\mathcal L_a^{\frac{s_c}2}u\|_{S(L^2;[t_{n_0},t])}\\
    &\leq  C_0\|\mathcal L_a^\frac{s_c}2 u(t_{n_0})\|_{L^2}  + \frac{1}{2\sigma+1}\|\mathcal L_a^{\frac{s_c}2}u\|_{S(L^2;[t_{n_0},t])}
    .
\end{align}

We can then absorb the last term of the right-hand side in the left-hand side and let $t \nearrow T^*$ to obtain
\begin{equation}
     \|\mathcal L_a^{\frac{s_c}2}u\|_{S(L^2;[t_{n_0},T^{*}))} \leq  
    \frac{(2\sigma+1) C_0}{2\sigma}\|\mathcal L_a^\frac{s_c}2 u(t_{n_0})\|_{L^2},
\end{equation}
which is a contradiction, since $\|\mathcal{L}_a^{\frac{1}{2}}u\|_{L^{\infty}_{[0,T^*)}L^2_x} + \|\mathcal{L}_a^{\frac{s_c}{2}}u\|_{L^{\infty}_{[0,T^*)}L^2_x}$ cannot be finite, otherwise $T^{*} = +\infty$.

\begin{rem}
    The constant $c>0$ in the previous proposition can be chosen independently of $u$. Indeed, from its proof,one can choose
\begin{equation}
    c = \frac{2\sigma}{\left[C_0(2\sigma+1)\right]^{\frac{2\sigma+1}{2\sigma}}}.
\end{equation}
\end{rem}

\begin{rem}
    The previous proposition can be adapted to the INLS equation \eqref{INLS}, showing that the lower bound
$$
	\|\mathcal \nabla u(t)\|_{L^2}\geq \frac{c}{(T^*-t)^{\frac{1-s_c}{2}}}, \,\,\, \textit{for all} \,\,\, t\in [0,T^*),
$$
always hold for $\dot{H}^{s_c}\cap \dot{H}^1$ blow-up solution in the INLS setting, improving our previous results \cite[Theorem 1.2 and Corollary 1.3]{CF20} and  \cite[Theorem 1.4 and Corollary 1.5]{CF23}.
\end{rem}

%
%
\end{proof}

\section{Blow-up solutions in $\dot H_a^{s_c}\cap \dot H_a^1$}\label{sec4}

\subsection{A virial-type estimate}

Consider $u\in C([0,\tau_*]: \dot H_a^{s_c}\cap \dot H_a^1)$ a solution to \eqref{PVI} and define the function
\begin{align}\label{virial}
	z(t)=\displaystyle\int\phi|u(t)|^2\,dx,
\end{align} 
where $\phi$ is cut-off function in $\Real^3$.
In \cite{LUCCASCARLOS}, the second author and Guzm\'an obtained
\begin{equation}\label{zR'2}
	z'(t)=2\,\mbox{Im}\int \nabla\phi\cdot\nabla u(t)\overline{u}(t)\,dx
\end{equation}
and 
\begin{align}\label{zR''}
	z''(t)=&4\,\mbox{Re} \sum_{j,k=1}^{3}\int \partial_ju(t)\,\partial_k\overline u(t)\,\partial^2_{jk}\phi\,dx-\int |u(t)|^2 \Delta^2\phi\,dx-2a\int \nabla (|x|^{-2})\cdot \nabla \phi|u(t)|^{2}\,dx\nonumber\\
	&-\frac{2\sigma}{\sigma+1}\int|x|^{-b}|u(t)|^{2\sigma+2}\Delta\phi\,dx+\frac{2}{\sigma+1}\int\nabla\left(|x|^{-b}\right)\cdot \nabla\phi|u(t)|^{2\sigma+2}\,dx.
\end{align}

Set $\phi$ to be a non-negative radial function $\phi\in C^{\infty}_0(\Real^3)$, such that
\begin{align}\label{phi}
	\phi(x)=
	\left\{
	\begin{array}{ll}
		|x|^2,&\mbox{ se }|x|\leq 2\\
		0,&\mbox{ se }|x|\geq 4
	\end{array}
	\right.
\end{align}
satisfying
\begin{align}\label{nablaphi}
	\phi(x)\leq c|x|^2, \quad |\nabla \phi (x)|^2\leq c\phi(x) \quad \mbox{and} \quad \partial_r^2\phi(x)\leq  2, \quad \mbox{for all } |x|>2,
\end{align}
with $r=|x|$. Then, define $\phi_R(x)=R^2\phi\left(\frac{x}{R}\right)$. 
Since $\phi$ is radial, we have
\begin{align}\label{virial1}
	z'(t)=2\,\mbox{Im}\int \partial_r\phi_R\frac{x\cdot \nabla u(t)}{r}\overline{u}(t)\,dx
\end{align}
and 
\begin{align}\label{virial2}
	z_R ''(t)=&4\int \frac{\partial_r\phi_R}{r}|\nabla u(t)|^2\,dx+4\int \left(\frac{\partial_r^2\phi_R}{r^{2}}-\frac{\partial_r \phi_R}{r^3}\right)|x\cdot \nabla u(t)|^2\,dx+4a\int \frac{\partial_r \phi_R}{r}|x|^{-2}|u(t)|^2\,dx  \nonumber\\
	&-\int|u(t)|^2 \Delta^2\phi_R
	\,dx-\frac{2\sigma}{\sigma+1}\int \left[\partial^2_r\phi_R +\left(N-1+\frac{b}{\sigma}\right)\frac{\partial_r \phi_R}{r}\right]|x|^{-b}|u(t)|^{2\sigma+2}\,dx,
\end{align}
where $\partial_r$ denotes the derivative with respect to $r=|x|$.

In the following lemma we prove a virial type estimate adapted to the $(\mbox{INLS})_a$ setting.

\begin{lemma}\label{lemintradial}
	If $u\in C([0,\tau_*]: \dot{H_a^{s_c}}(\Real^3)\cap \dot H_a^1(\Real^3) )$ is a solution to \eqref{PVI} with initial data $u(0)=u_0$, then there exists $c>0$ depending only on $N, \sigma, b$ such that for all $R>0$ and $t\in[0, \tau_\ast]$ we have
	\begin{align}
	8\sigma s_c\|\mathcal L_a^{\frac12}
 u(t)\|^2_{L^2}&+z''_R(t)-16(\sigma s_c+1)E[u_0]\\
 \lesssim &\left(\frac{1}{R^2}\int_{2R\leq |x|\leq 4R}|u(t)|^2\,dx+a\int_{|x|\geq R} |x|^{-2}|u(t)|^2\,dx+\int_{|x|\geq R}|x|^{-b}|u(t)|^{2\sigma+2}\,dx\right).\label{estintradial}
	\end{align}
\end{lemma}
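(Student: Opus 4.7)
The plan is to start from the exact formula \eqref{virial2} for $z_R''(t)$, combine it with energy conservation, and exploit the fact that $\phi_R(x) = |x|^2$ on the ball $\{|x|\leq 2R\}$ so that a Pohozaev-type cancellation annihilates every contribution from the inner ball. The remainder is then supported on $\{|x|>2R\}$, where the pointwise bounds \eqref{nablaphi} on the cutoff produce precisely the three error terms on the right-hand side.

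First I would use $E[u_0] = E[u(t)]$ together with the definition \eqref{Energy} to rewrite
\[
8\sigma s_c\,\|\mathcal L_a^{1/2} u(t)\|_{L^2}^2 - 16(\sigma s_c+1)E[u_0] \,=\, -8\|\mathcal L_a^{1/2}u(t)\|_{L^2}^2 + \frac{8(\sigma s_c+1)}{\sigma+1}\int|x|^{-b}|u(t)|^{2\sigma+2}\,dx,
\]
and record the algebraic identity $8(\sigma s_c+1) = 4(3\sigma+b)$, which follows at once from $s_c = \tfrac{3}{2} - \tfrac{2-b}{2\sigma}$. Using $\|\mathcal L_a^{1/2}u\|_{L^2}^2 = \|\nabla u\|_{L^2}^2 + a\||x|^{-1}u\|_{L^2}^2$ and the pointwise identity $|x\cdot\nabla u|^2 = r^2|\partial_r u|^2$, I would then substitute \eqref{virial2} and group terms so that the left-hand side of the lemma becomes a sum of weighted integrals, with weights proportional to $(4\partial_r^2\phi_R - 8)$ and $(4\partial_r\phi_R/r - 8)$ against the radial and tangential parts of $|\nabla u|^2$ respectively, $a(4\partial_r\phi_R/r - 8)|x|^{-2}$ against $|u|^2$, $-\Delta^2\phi_R$ against $|u|^2$, and the nonlinear weight $-\tfrac{2\sigma}{\sigma+1}\bigl[\partial_r^2\phi_R + (N-1+\tfrac{b}{\sigma})\tfrac{\partial_r\phi_R}{r} - \tfrac{2(3\sigma+b)}{\sigma}\bigr]|x|^{-b}$ against $|u|^{2\sigma+2}$.

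The key observation is that on $\{|x|\leq 2R\}$, where $\phi_R(x) = |x|^2$ exactly, every one of these weights vanishes: indeed $\partial_r^2\phi_R = \partial_r\phi_R/r = 2$, $\Delta^2\phi_R = 0$, and the nonlinear bracket evaluates (for $N=3$) to $2+2(2+b/\sigma) - 2(3\sigma+b)/\sigma = 0$. This is the Pohozaev-type cancellation, hinging precisely on the identity $8(\sigma s_c+1) = 4(3\sigma+b)$, and it reduces every integral to the complement $\{|x|>2R\}$.

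It remains to estimate the leftover integrals on $\{|x|>2R\}$ via \eqref{nablaphi}, choosing $\phi$ additionally so that $\partial_r\phi/r\leq 2$ pointwise (elementary, since $\phi$ is non-increasing on $\{2\leq|y|\leq 4\}$). The conditions $\partial_r^2\phi_R\leq 2$ and $\partial_r\phi_R/r\leq 2$ force the first two integrals to be non-positive, hence discardable. On $\{|x|>2R\}$ one also has $|\partial_r\phi_R/r|\lesssim 1$, $|\Delta^2\phi_R|\lesssim R^{-2}\mathbf{1}_{\{2R\leq|x|\leq 4R\}}$, and a uniformly bounded nonlinear bracket; these produce in turn $a\int_{|x|\geq R}|x|^{-2}|u|^2\,dx$, $R^{-2}\int_{2R\leq|x|\leq 4R}|u|^2\,dx$, and $\int_{|x|\geq R}|x|^{-b}|u|^{2\sigma+2}\,dx$. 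Summing yields the claimed bound. The main subtlety here is algebraic rather than analytic—tracking coefficients so that the Pohozaev cancellation on the inner ball is exact—after which the pointwise estimates on $\{|x|>2R\}$ are routine.
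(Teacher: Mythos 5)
Your proposal is correct and follows essentially the same route as the paper: decompose $z_R''$ into $8P[u]$ plus error terms whose weights vanish on $\{|x|\le 2R\}$ where $\phi_R=|x|^2$, use energy conservation (the identity $8(\sigma s_c+1)=4(3\sigma+b)$ is exactly what makes $P[u(t)]=-\sigma s_c\|\mathcal L_a^{1/2}u(t)\|_{L^2}^2+2(\sigma s_c+1)E[u_0]$ work), and then bound the leftovers on $\{|x|>2R\}$. The only cosmetic difference is in the gradient term: you split $|\nabla u|^2$ into radial and angular parts, whereas the paper applies Cauchy--Schwarz to $|x\cdot\nabla u|^2$ on the set where the second weight is positive; both give the same sign conclusion, and your extra condition $\partial_r\phi_R/r\le 2$ is in fact automatic from $\partial_r^2\phi_R\le 2$ by integration from the origin.
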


\begin{proof} Define
\begin{align}
	P[u]&=\int |\nabla u|^{2}\,dx+a\int |x|^{-2}|u|^2\,dx-\frac{3\sigma+b}{2\sigma+2}\int |x|^{-b}|u|^{2\sigma+2}\,dx,\label{P}
\end{align}
then can rewrite $z_R''(t)$ in \eqref{virial2} as
\begin{align}\label{z''K}
	z_R''(t)=8P[u(t)]+K_1+K_2+K_3+K_4,
\end{align}
where
\begin{align}
	K_1=&4\int \left(\frac{\partial_r\phi_R}{r}-2\right)|\nabla u(t)|^2\,dx+4\int \left(\frac{\partial^2_r\phi_R}{r^2}-\frac{\partial_r \phi_R}{r^3}\right)|x\cdot \nabla u(t)|^2\,dx,\label{K1}\\
	K_2=&4a\int \left(\frac{\partial_r \phi_R}{r}-2\right)|x|^{-2}|u(t)|^{2}\,dx\label{K2}\\
	K_3=&-\frac{2\sigma}{\sigma+1}\int\left[\partial^2_r\phi_R+\left(2+\frac{b}{\sigma}\right)\frac{\partial_r \phi_R}{r}-6-\frac{2b}{\sigma}\right]|x|^{-b}|u(t)|^{2\sigma+2}\,dx,\label{K3}\\
	K_4=&-\int|u(t)|^2\Delta^2\phi_R\,dx\label{K4}.
\end{align}

First note, by definition of $\phi_R$, that 
\begin{align}
	K_4\lesssim \frac{1}{R^{2}}\int_{2R\leq |x|\leq 4R} |u(t)|^{2}\,dx\label{K_4}.
\end{align}

Moreover, if $0\leq |x|\leq 2R$, then
\begin{align}
	\partial_r\phi_R(x)=2|x|=2r,\,\,\,\,\,\,\partial^2_r\phi_R(x)=2,
\end{align}
which implies
\begin{align}
	\mbox{supp} \left[\partial^2_r\phi_R+\left(2+\frac{b}{\sigma}\right)\frac{\partial_r\phi_R}{r}-6-\frac{2b}{\sigma}\right]\,\cup\,\, \mbox{supp} \left[\frac{\partial_r\phi_R}{r}-2\right]\subset (2R,\infty).
\end{align}
Therefore, there exists $c>0$ such that
\begin{align}
	K_3\lesssim& \int_{|x|\geq R}|x|^{-b}|u(t)|^{2\sigma+2}\,dx\label{K_3}
\end{align}
and
\begin{align}
K_2\lesssim a\int_{|x|\geq R} |x|^{-2}|u(t)|^2\,dx.\label{K_2}
\end{align}

Now, we set
\begin{align}
	\Omega=\left\{x\in \mathbb R^3;\,\,\frac{\partial^2_r\phi_R(x)}{r^2}-\frac{\partial_r\phi_R(x)}{r^3}\leq 0\right\}.
\end{align}
Since $\partial^2_r\phi_R\leq 2$ it follows that $\partial_r\phi_R(|x|)\leq 2|x|$, for all $x\in \mathbb R^3$. Then, using the Cauchy-Schwartz inequality, we get
\begin{align}
	K_1=&\,\,4\int\left(\frac{\partial_r\phi_R}{r}-2\right)|\nabla u(t)|^2\,dx+4\int\left(\frac{\partial_r^2\phi_R}{r^2}-\frac{\partial_r\phi_R}{r^3}\right) |x\cdot \nabla u(t)|^2\,dx\nonumber\\
	=&\,\,4\int_{\Omega} \left(\frac{\partial_r\phi_R}{r}-2\right)|\nabla u(t)|^2\,dx+4\int_{\Omega} \left(\frac{\partial_r^2\phi_R}{r^2}-\frac{\partial_r\phi_R}{r^3}\right) |x\cdot \nabla u(t)|^2\,dx\nonumber\\
	&+4\int_{\mathbb R^3\backslash\Omega} \left(\frac{\partial_r\phi_R}{r}-2\right)|\nabla u(t)|^2\,dx+4\int_{\mathbb R^3\backslash\Omega} \left(\frac{\partial_r^2\phi_R}{r^2}-\frac{\partial_r\phi_R}{r^3}\right) |x\cdot \nabla u(t)|^2\,dx\nonumber\\
	\leq&\,\, 4\int_{\mathbb R^3\backslash\Omega} \left(\frac{\partial_r\phi_R}{r}-2\right)|\nabla u(t)|^2\,dx+4\int_{\mathbb R^3\backslash\Omega} \left(\partial_r^2\phi_R-\frac{\partial_r\phi_R}{r}\right)\frac{|x|^2}{r^2}|\nabla u(t)|^2\,dx\nonumber\\
	=&\,\, 4\int_{\Real^3\backslash \Omega} \left(\partial_r^2\phi_R-2\right)|\nabla u(t)|^2\,dx\leq 0,\label{K_1}
\end{align}
where in the last inequality we have used the assumption \eqref{nablaphi}.

Collecting \eqref{K_4}, \eqref{K_3}, \eqref{K_2} and \eqref{K_4}, there exists $c>0$ such that
\begin{align}
	z_R''(t)-8P[u(t)]\lesssim \frac{1}{R^2}\int_{2R\leq |x|\leq 4R}|u(t)|^2\,dx+ a\int_{|x|\geq R} |x|^{-2}|u(t)|^2\,dx+\int_{|x|\geq R}|x|^{-b}|u(t)|^{2\sigma+2}\,dx\label{zR8Q}.
\end{align}

Finally, from the energy conservation \eqref{Energy}, we can write
\begin{align}
	P[u(t)]= -\sigma s_c\|\mathcal L_a^{\frac12}
 u(t)\|_{L^2}^2+2(\sigma s_c+1)E[u_0]
\end{align}
and combining the last two relations, we deduce the estimate \eqref{estintradial}.
\end{proof}
\subsection{Gagliardo-Nirenberg inequality}
 
We first recall the following scaling invariant Morrey-Campanato type semi-norm, used in \citet{MR_Bsc} 
\begin{equation}\label{defrho}
\rho(u,R)=\sup_{R'\geq R}\frac{1}{(R')^{2s_c}}\int_{R'\leq |x|\leq 2R'}|u|^2\,dx.
\end{equation}
It is easy to see that $\rho(u,R)$ is non-increasing in $R>0$. Moreover, by H\"older's inequality, there exists a universal constant $c>0$ such that for all $u\in L^{\sigma_c}$ and $R>0$
\begin{equation}\label{radial1}
	\frac{1}{R^{2s_c}}\int_{|x|\leq R}|u|^2\,dx\leq c\|u\|_{L^{\sigma_c}}^2
\end{equation}
and
\begin{equation}\label{radial2}
	\lim_{R\to+\infty}\frac{1}{R^{2s_c}}\int_{|x|\leq R}|u|^2\,dx=0
\end{equation}
(c.f. Merle and Raph\"ael in \citep[Lemma 1]{MR_Bsc} and also \cite[Lemma 2.1]{CF20}). In \cite{CF20} and \cite{CF23}, the last two authors proved some interpolation inequalities adapted to the inhomogeneous case in $\dot{H}^{s_c}\cap\dot{H}^{1}$. The relations \eqref{equivsc1} imply that these results also hold in the space $\dot{H}_a^{s_c}\cap\dot{H}_a^{1}$, for $a>0$ and $x\in \mathbb{R}^3$. Below we recall such estimates.

\begin{lemma}[Gagliardo-Nirenberg type inequality \cite{CF20}, \cite{CF23}]\label{lemaradialGN1} Assume $0<b<3/2$. Then, for all $\eta>0$, there exists a constant $C_\eta>0$ such that 
\begin{itemize}
\item[(i)][Radial] for all $u\in \dot{H_a^{s_c}}(\Real^3)\cap \dot H_a^1(\Real^3)$ with radial symmetry the following inequality holds
	\begin{equation}\label{GNradial}
	\int_{|x|\geq R}|x|^{-b}|u|^{2\sigma+2}\,dx\leq \eta\|\nabla u\|_{L^{2}}^{2}+\frac{C_{\eta}}{R^{2(1-s_ c)}}\left\{[\rho(u,R)]^{\frac{2+\sigma}{2-\sigma}}+[\rho(u,R)]^{\sigma+1}\right\},
	\end{equation}
	if $\frac{2-b}{3}<\sigma<2-b$.
\item[(ii)][Non-radial] for all $u\in \dot{H_a^{s_c}}(\Real^3)\cap \dot H_a^1(\Real^3)$ the following inequality holds
	\begin{equation}\label{GNradial2}
		\int_{|x|\geq R}|x|^{-b}|u|^{2\sigma+2}\,dx\leq \eta\|\nabla u\|_{L^{2}}^{2}+\frac{C_{\eta}}{R^{2(1-s_ c)}}[\rho(u,R)]^{\frac{2-\sigma}{2-3\sigma}},
	\end{equation}
	if $\frac{2-b}{3}<\sigma<\min\{2-b, \frac{2}{3}\}$.
\end{itemize}
\end{lemma}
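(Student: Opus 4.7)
The plan is to work dyadically. Write $\{|x|\geq R\}=\bigcup_{k\geq 0} A_{R_k}$ with $R_k:=2^k R$, $A_{R_k}:=\{R_k\leq|x|\leq 2R_k\}$, and control $I_k:=\int_{A_{R_k}}|x|^{-b}|u|^{2\sigma+2}\,dx$ on each shell. The one fact I will use repeatedly is that by definition of $\rho$ in \eqref{defrho}, one has $\|u\|_{L^2(A_{R_k})}^2\leq R_k^{2s_c}\rho(u,R)$ for every $k\geq 0$. The goal is then to bound each $I_k$ by $\eta\|\nabla u\|_{L^2(\tilde A_{R_k})}^2$ plus $R_k^{-2(1-s_c)}$ times a fixed power of $\rho$, where $\tilde A_{R_k}$ is a slightly enlarged annulus. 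Summation then closes: the geometric series $\sum_k 2^{-2k(1-s_c)}$ converges since $s_c<1$ (equivalent to $\sigma<2-b$), and the $\tilde A_{R_k}$ have finite overlap so $\sum_k\|\nabla u\|_{L^2(\tilde A_{R_k})}^2\lesssim\|\nabla u\|_{L^2(\mathbb R^3)}^2$.

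For (ii), the non-radial local estimate comes from pure interpolation. Writing $|u|^{2\sigma+2}=|u|^{2-\sigma}\cdot|u|^{3\sigma}$ and applying H\"older with conjugate exponents $2/(2-\sigma)$ and $2/\sigma$ yields
\begin{equation*}
\int_{A_{R_k}}|u|^{2\sigma+2}\,dx\leq\|u\|_{L^2(A_{R_k})}^{2-\sigma}\|u\|_{L^6(A_{R_k})}^{3\sigma}\lesssim\|u\|_{L^2(A_{R_k})}^{2-\sigma}\|\nabla u\|_{L^2(\mathbb R^3)}^{3\sigma},
\end{equation*}
where the last step uses the global Sobolev embedding $\dot H^1(\mathbb R^3)\hookrightarrow L^6$. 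Inserting $R_k^{-b}$, using $\|u\|_{L^2(A_{R_k})}^{2-\sigma}\leq R_k^{s_c(2-\sigma)}\rho^{(2-\sigma)/2}$, and summing in $k$ demands $-b+s_c(2-\sigma)<0$; a direct algebraic check using $s_c=3/2-(2-b)/(2\sigma)$ reduces this to $(2-3\sigma)(2-b-\sigma)>0$, which is exactly the extra threshold $\sigma<2/3$ in (ii). With this restriction the dyadic series sums to a multiple of $R^{-b+s_c(2-\sigma)}\rho^{(2-\sigma)/2}\|\nabla u\|_{L^2}^{3\sigma}$, and a final Young inequality with exponents $(2/(2-3\sigma),2/(3\sigma))$ absorbs $\|\nabla u\|^{3\sigma}$ into $\eta\|\nabla u\|_{L^2}^2$ and upgrades the $\rho$-power to $(2-\sigma)/(2-3\sigma)$, simultaneously adjusting the $R$-exponent to $-2(1-s_c)$.

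For (i), a pointwise Strauss estimate replaces Sobolev embedding and allows $\sigma$ to reach $2-b$. Pick a smooth cutoff $\chi_k$ equal to $1$ on $A_{R_k}$, supported in $\tilde A_{R_k}$, with $|\nabla\chi_k|\lesssim R_k^{-1}$, and apply the $3$D radial Sobolev bound $|y|^2|(\chi_k u)(y)|^2\lesssim\|\chi_k u\|_{L^2}\|\nabla(\chi_k u)\|_{L^2}$ to obtain
\begin{equation*}
\|u\|_{L^\infty(A_{R_k})}^{2\sigma}\lesssim R_k^{-2\sigma}\|u\|_{L^2(\tilde A_{R_k})}^\sigma\bigl(\|\nabla u\|_{L^2(\tilde A_{R_k})}^\sigma+R_k^{-\sigma}\|u\|_{L^2(\tilde A_{R_k})}^\sigma\bigr).
\end{equation*}
Multiplying by $R_k^{-b}\|u\|_{L^2(A_{R_k})}^2$ splits $I_k$ into two pieces. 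On the first, Young's inequality with exponents $(2/(2-\sigma),2/\sigma)$ (requiring only $\sigma<2$, automatic here) absorbs $\|\nabla u\|^\sigma$ into $\eta\|\nabla u\|_{L^2}^2$ and produces the $\rho^{(\sigma+2)/(2-\sigma)}$ contribution; on the second piece no Young is needed, and the substitution $\|u\|_{L^2(A_{R_k})}^2\leq R_k^{2s_c}\rho$ yields $\rho^{\sigma+1}$ directly. A direct check using $s_c=3/2-(2-b)/(2\sigma)$ shows both pieces carry the same weight $R_k^{-2(1-s_c)}$, producing after summation exactly the two-term structure of \eqref{GNradial}.

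The main obstacle throughout is the bookkeeping of the $R_k$-powers after Young's inequality: the cancellation $\bigl(-(b+2\sigma)+s_c(\sigma+2)\bigr)\cdot\tfrac{2}{2-\sigma}=-2(1-s_c)$ in (i), and its counterpart $(-b+s_c(2-\sigma))\cdot\tfrac{2}{2-3\sigma}=-2(1-s_c)$ in (ii), is what produces the scale-invariant weight $R^{-2(1-s_c)}$ and simultaneously reveals the extra constraint $\sigma<2/3$ needed in (ii); without it the intermediate dyadic series diverges before Young can even be applied.
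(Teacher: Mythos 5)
Your proof is correct, and it follows essentially the same route as the cited sources \cite{CF20}, \cite{CF23} (the paper itself only quotes the lemma from there): dyadic annuli $A_{R_k}$ with $\|u\|_{L^2(A_{R_k})}^2\leq R_k^{2s_c}\rho(u,R)$, H\"older plus Sobolev on each shell in the non-radial case, the radial Strauss bound in the radial case, and a per-scale Young inequality whose exponent bookkeeping you have verified correctly, including the identification of $\sigma<2/3$ as the convergence threshold in (ii). The only point to tidy is the innermost shell $k=0$ in part (i), where the enlarged support of $\chi_0$ dips below $|x|=R$ and is therefore not controlled by $\rho(u,R)$; this is repaired by replacing the cutoff argument with the mean-value/fundamental-theorem version of the radial estimate localized to $A_{R_k}$ itself, namely $\|u\|_{L^\infty(A_{R_k})}^2\lesssim R_k^{-3}\|u\|_{L^2(A_{R_k})}^2+R_k^{-2}\|u\|_{L^2(A_{R_k})}\|\nabla u\|_{L^2(A_{R_k})}$, which yields exactly your two-piece decomposition without any enlargement.
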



Next, we use the previous lemma to refine the estimate \eqref{estintradial}. Indeed, since $\rho$ non-increasing, for all $u\in \dot{H}_a^{s_c} \subset L^{\sigma_c}$ we have
	\begin{align}
		\int_{|x|\geq R} |x|^{-2}|u|^2\,dx&=\sum_{j=0}^\infty \int _{2^jR\leq |x|\leq 2^{j+1}R}|x|^{-2}|u|^2\,dx\leq 	\sum_{j=0}^\infty\dfrac{1}{(2^jR)^2}\int_{2^jR\leq |x|\leq 2^{j+1}R}|u|^{2}\,dx\\ 
		&\leq \frac{1}{R^{2(1-s_c)}}\rho(u,R)\sum_{j=0}^\infty \frac{1}{(2^{2(1-s_c)})^j}\leq \frac{C}{R^{2(1-s_c)}}\rho(u,R). \label{GNradial3}
	\end{align}
Define the number $\gamma=\gamma(\sigma, N)$ by
\begin{align}
	\displaystyle\gamma=\left\{
	\begin{array}{cc}
	\dfrac{2+\sigma}{2-\sigma},& \mbox{ if $\frac{2-b}{3}<\sigma<2-b$ and $u\in \dot{H}_a^{s_c}\cap\dot{H}_a^{1}$ is radial},\\
	&\\
	\dfrac{2-\sigma}{2-3\sigma },& \mbox{ if $\frac{2-b}{3}<\sigma<\min\{2-b, \frac{2}{3}\}$ and $u\in \dot{H}_a^{s_c}\cap\dot{H}_a^{1}$}.
	\end{array} 
\right.
\end{align} 
Note that in both cases, we have
\begin{align}
	\gamma>\sigma+1>1.
\end{align}
Thus, inserting \eqref{GNradial}, \eqref{GNradial2}, \eqref{GNradial3} in \eqref{estintradial} we deduce
	\begin{align}
	8\sigma s_c\|\mathcal L_a^{\frac12}
 u(t)\|^2_{L^2}+z''_R(t)&-16(\sigma s_c+1)E[u_0]\\\leq &\eta\|\mathcal L_a^{\frac12}
 u(t)\|_{L^2}^2+\frac{C_{\eta}}{R^{2(1-s_c)}}\left\{[\rho(u(t), R)]^{\gamma}+\rho(u(t),R)\right\}.\label{estint}
\end{align}
Now, assume that $v\in C([0,\tau_*]: \dot H_a^{s_c}\cap \dot H_a^1)$ and fix $\tau_0\in [0,\tau_*]$. Choosing $\eta>0$ we can absorb the term $\eta\|\mathcal L_a^{\frac12}
 v(t)\|_{L^2}^2$ in the left hand side of the above inequality, moreover integrating in $t$ from $0$ to $\tau$ and then in $\tau$ from $0$ to $\tau_0$, we obtain\footnote{See the proof of estimate (4.15) in \cite{CF20} for more details.}
\begin{align}
	4\sigma s_c\int_{0}^{\tau_0}(\tau_0-\tau)&\|\mathcal L_a^{\frac12}
 v(\tau)\|_{L^2}^2\,d\tau+z_R(\tau_0)\\
	\leq& z_R(0)+\tau_0\left(z'_R(0)+8\tau_0(\sigma s_c+1)E[v_0]\right)\\
	\quad &+\frac{1}{R^{2(1-s_c)}}\int_{0}^{\tau_0}\int_{0}^{\tau}\left\{[\rho(v(t), R)]^{\gamma}+\rho(v(t),R)\right\}\,dtd\tau\\\leq& cR^{2(1+s_c)}\|v_0\|_{L^{\sigma_c}}^2+\tau_0\left(z'_R(0)+8\tau_0(\sigma s_c+1)E[v_0]\right)\\
	\quad &+\frac{1}{R^{2(1-s_c)}}\int_{0}^{\tau_0}\int_{0}^{\tau}\left\{[\rho(v(t), R)]^{\gamma}+\rho(v(t),R)\right\}\,dtd\tau.\label{ineq1}
\end{align}
	where we have used, from inequality \eqref{radial1}, that
\begin{align}\label{phiR1}
	z_R(0)\leq cR^{2}\int_{|x|\leq 4R}|v_0|^2\,dx\leq cR^{2(1+s_c)}\|v_0\|_{L^{\sigma_c}}^2.
\end{align}
Next, observe that for all $R>0$ the definition of $\phi$ (see \eqref{phi}) yields
\begin{align}\label{zRt0}
	z_R(\tau_0)
	&\geq \int_{R\leq |x|\leq 2R}|x|^2|v(\tau_0)|^{2}\,dx\geq {R^2}\int_{R\leq|x|\leq 2R}|v(\tau_0)|^2\,dx.
\end{align}
Thereby, injecting \eqref{zRt0} into \eqref{ineq1} we have
\begin{align}
	4\sigma s_c\int_{0}^{\tau_0}(\tau_0-\tau)&\|\mathcal L_a^{\frac12}
 v(\tau)\|_{L^2}^2\,d\tau+{R^2}\int_{R\leq|x|\leq 2R}|v(\tau_0)|^2\,dx
	\\\leq& cR^{2(1+s_c)}\|v_0\|_{L^{\sigma_c}}^2+\tau_0\left(z'_R(0)+8\tau_0(\sigma s_c+1)E[v_0]\right)\\
	\quad &+\frac{1}{R^{2(1-s_c)}}\int_{0}^{\tau_0}\int_{0}^{\tau}\left\{[\rho(v(t),R)]^{\gamma}+\rho(v(t),R)\right\}\,dtd\tau.\label{ineq2}
\end{align}

\subsection{Preliminary lemmas}
We now recall two results that we will be used in the proof of our main theorems. The first one provides a uniform control of the $\rho$ norm and a global dispersive estimate. 

\begin{prop}\label{prop1}
Let $0<b<\frac{3}{2}$. Assume either $\frac{2-b}{3}<\sigma<2-b$ and $v_0\in \dot{H_a^{s_c}}(\Real^3)\cap \dot H_a^1(\Real^3)$ is radial or $\frac{2-b}{3}<\sigma<\min\{2-b, \frac{2}{3}\}$ and $v_0\in \dot{H}_a^{s_c}\cap\dot{H}_a^{1}$. Let $v\in C\left([0,\tau_\ast]: \dot{H_a^{s_c}}(\Real^3)\cap \dot H_a^1(\Real^3)\right)$ be a solution to \eqref{PVI} such that the initial data $v(0)=v_0$ satisfies
	\begin{equation}\label{hpprop1i}
	\tau_\ast^{1-s_c}\max\{E[v_0],0\}<1
	\end{equation}
	and
	\begin{align}\label{hpprop1ii}
	M_0:=\frac{4\|v_0\|_{L^{\sigma_c}}}{K_a}\geq 2,
	\end{align}
where $K_a$ is a optimal constant in \eqref{GNacrit2}. Then, there exist universal constants $C_1,\alpha_1,\alpha_2>0$ depending only on $N,\sigma$ and $b$  such that, for all $\tau_0\in [0,\tau_\ast]$, the following uniform estimates hold
	\begin{align}\label{prop1ii}
	\rho(v(\tau_0),M_0^{\alpha_1}\sqrt{\tau_0})\leq C_1M_0^2
	\end{align}
and 
	\begin{align}\label{prop1i}
	\int_0^{\tau_0}(\tau_{0}-\tau)\|\mathcal L_a^{\frac12}
 v(\tau)\|_{L^2}^2\,d\tau \leq M_0^{\alpha_2}\tau_{0}^{1+s_c}.
	\end{align}
\end{prop}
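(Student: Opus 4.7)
The strategy is a continuity/bootstrap argument built on the virial inequality \eqref{ineq2}. For a fixed $\tau_0 \in (0, \tau_\ast]$, set $R_0 := M_0^{\alpha_1}\sqrt{\tau_0}$ for an exponent $\alpha_1 > 0$ to be chosen, and define
\[
\Psi(\tau) = \rho(v(\tau), R_0), \qquad \tau \in [0, \tau_0].
\]
The continuity $v \in C([0,\tau_\ast]: \dot H^{s_c}_a)$ combined with \eqref{equiv_sigma_c} ensures $\Psi$ is continuous, and \eqref{radial1} yields $\Psi(0) \lesssim \|v_0\|^2_{L^{\sigma_c}} \lesssim M_0^2$. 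I argue by contradiction: assume there is a first time $\tau^* \in (0, \tau_0]$ at which $\Psi(\tau^*) = C_1 M_0^2$ for a large universal constant $C_1$ to be fixed. By continuity the bootstrap assumption $\Psi(\tau) \leq C_1 M_0^2$ holds on $[0, \tau^*]$.

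Apply \eqref{ineq2} at time $\tau^*$ with an arbitrary $R' \geq R_0$. Since $\rho$ is non-increasing in $R$, the bootstrap bound on $\Psi$ controls $\rho(v(t), R') \leq \Psi(t) \leq C_1 M_0^2$ for every $R' \geq R_0$ and $t \in [0, \tau^*]$. Dropping the non-negative LHS integral, dividing by $(R')^{2(1+s_c)}$, and taking the supremum over $R' \geq R_0$ leads to
\[
\Psi(\tau^*) \;\lesssim\; \|v_0\|^2_{L^{\sigma_c}} + \sup_{R' \geq R_0}\frac{\tau^* |z'_{R'}(0)|}{(R')^{2(1+s_c)}} + \frac{(\tau^*)^2 \max\{E[v_0], 0\}}{R_0^{2(1+s_c)}} + \frac{(\tau^*)^2}{R_0^{4}}\bigl[(C_1 M_0^2)^\gamma + C_1 M_0^2\bigr].
\]
Using $R_0^2 = M_0^{2\alpha_1}\tau_0$ and $\tau^* \leq \tau_0 \leq \tau_\ast$, the bootstrap term is at most $C_1^\gamma M_0^{2\gamma - 4\alpha_1} + C_1 M_0^{2 - 4\alpha_1}$, and \eqref{hpprop1i} gives the energy term $\lesssim M_0^{-2\alpha_1(1+s_c)}$. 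The virial term, via Cauchy--Schwarz and \eqref{radial1}, satisfies $|z'_{R'}(0)| \leq c(R')^{1+s_c}\|v_0\|_{L^{\sigma_c}}\|\nabla v_0\|_{L^2}$; the supremum is attained at $R' = R_0$ and reduces the problem to a polynomial-in-$M_0$ bound on $\tau_0^{(1-s_c)/2}\|\nabla v_0\|_{L^2}$, obtained from the energy identity $\|\mathcal L^{1/2}_a v_0\|^2_{L^2} = 2E[v_0] + \frac{1}{\sigma+1}\int|x|^{-b}|v_0|^{2\sigma+2}dx$ together with the sharp inequality \eqref{GNacrit2} and hypothesis \eqref{hpprop1i}. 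Choosing $\alpha_1 > (\gamma-1)/2$ and $C_1$ sufficiently large (depending only on $N, \sigma, b$), every contribution is strictly smaller than $C_1 M_0^2$, contradicting $\Psi(\tau^*) = C_1 M_0^2$. This proves \eqref{prop1ii}.

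For \eqref{prop1i}, once $\rho(v(\tau), R_0) \leq C_1 M_0^2$ is known on $[0, \tau_0]$, substitute this bound into the RHS of \eqref{ineq2}; the LHS term $4\sigma s_c \int_0^{\tau_0}(\tau_0-\tau)\|\mathcal L^{1/2}_a v(\tau)\|^2_{L^2}d\tau$ is then bounded by the same polynomial in $M_0$ times $\tau_0^{1+s_c}$, yielding \eqref{prop1i} with an explicit exponent $\alpha_2$. The hardest step throughout is the treatment of $z'_R(0)$: Cauchy--Schwarz introduces $\|\nabla v_0\|_{L^2}$, which has no a priori universal bound and must be estimated polynomially in $M_0$ through the interplay of the energy identity, the sharp GN inequality \eqref{GNacrit2}, and the smallness hypothesis \eqref{hpprop1i}.
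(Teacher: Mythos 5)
Your overall skeleton --- a continuity/bootstrap on the Morrey--Campanato quantity fed into the twice-integrated virial inequality \eqref{ineq2}, with the radius $M_0^{\alpha_1}\sqrt{\tau_0}$ chosen to beat the power $\gamma>1$ in the nonlinear term --- is the right one, and it matches the outline the paper points to (the proof is deferred to Proposition 3.2 of \cite{CF23}; the paper itself only supplies \eqref{ineq2}). The treatment of the data term $z_R(0)$, the energy term via \eqref{hpprop1i}, and the double-integral term via the bootstrap hypothesis and monotonicity of $\rho$ are all fine.

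The genuine gap is exactly at the step you yourself flag as the hardest: the term $\tau_0 z'_{R}(0)$. After Cauchy--Schwarz you are left needing $\tau_0^{(1-s_c)/2}\|\nabla v_0\|_{L^2}\lesssim M_0^{\mathrm{poly}}$, and the route you propose cannot deliver it. Plugging the sharp inequality \eqref{GNacrit2} into the energy identity gives
\begin{equation}
\Bigl(1-\bigl(\tfrac{M_0}{4}\bigr)^{2\sigma}\Bigr)\|\mathcal L_a^{1/2}v_0\|_{L^2}^2\leq 2E[v_0],
\end{equation}
which is an \emph{upper} bound on $\|\mathcal L_a^{1/2}v_0\|_{L^2}$ only when $M_0<4$; under the standing hypothesis $M_0\geq 2$ the coefficient is typically negative and the inequality yields no information (or a lower bound). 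More decisively, no estimate of the form $\tau_0^{(1-s_c)/2}\|\nabla v_0\|_{L^2}\lesssim M_0^{\mathrm{poly}}$ can be available from \eqref{hpprop1i}--\eqref{hpprop1ii}: in the proof of Theorem \ref{scteo1} the proposition is applied to a putative global solution with $E[u_0]\leq 0$ (so \eqref{hpprop1i} holds for every $\tau_*$) and $\tau_*\to\infty$, while $\|\nabla u_0\|_{L^2}$ and $M_0$ stay fixed; your reduced quantity blows up there even though the conclusion must remain uniform. So the $z'_R(0)$ term has to be disposed of without ever invoking an a priori bound on $\|\nabla v_0\|_{L^2}$ in terms of $M_0$ --- this is the actual content of the Merle--Rapha\"el-type argument in \cite{MR_Bsc}, \cite{CF20}, \cite{CF23} (e.g.\ by splitting via Young's inequality so that one piece is absorbed into $R^2\int_{|x|\leq 4R}|v_0|^2$ and the other into the left-hand side term $\int_0^{\tau_0}(\tau_0-\tau)\|\mathcal L_a^{1/2}v(\tau)\|_{L^2}^2\,d\tau$ after an appropriate choice of base time), and it is precisely the piece missing from your proposal. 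The same unresolved term also enters your derivation of \eqref{prop1i}, so both conclusions are affected.
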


The proof of Proposition \ref{prop1} follows exactly the same outline as the proof of Proposition 3.2 in \cite{CF23}, just using the new estimate \eqref{ineq2} and the equivalence \eqref{equiv}. The next result shows that restricting the initial data in a neighborhood of the origin we can deduce a lower bound for its $L^2$ norm.

\begin{prop}\label{prop2} Under the same assumptions of Proposition \ref{prop1}, let 
	\begin{align}\label{hpprop2i}
	\tau_0\in\left[0,\frac{\tau_*}{2}\right].
	\end{align}
Define $\lambda_v(\tau)=\|\mathcal L_a^{\frac12}
 v(\tau)\|^{-\frac{1}{1-s_c}}_{L^2}$ and assume that
	\begin{align}\label{Ev}
	E[v_0]\leq \frac{\|\mathcal L_a^{\frac12}
 v(\tau_0)\|^2_{L^2}}{4}=\frac{1}{4\lambda_v^{2(1-s_c)}(\tau_0)}.
	\end{align}
Then, there exist universal  constants $C_2, \alpha_3>0$ depending only on $N,\sigma$ and $b$ such that if
	\begin{align}\label{F}
	F_*=\frac{\sqrt{\tau_0}}{\lambda_v(\tau_0)}\,\,\,\mbox{ and }\,\,\,D_*=M_0^{\alpha_3}\max[1,F_*^{\frac{1+s_c}{1-s_c}}],
	\end{align}
	then
	\begin{align}\label{tprop2}
	\frac{1}{\lambda_v^{2s_c}(\tau_0)}\int_{|x|\leq D_*\lambda_v(\tau_0)}|v_0|^2\,dx\geq C_2.
	\end{align}
\end{prop}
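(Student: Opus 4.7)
The plan is to combine the energy assumption with the sharp Gagliardo--Nirenberg inequality \eqref{GNacrit2}, the Morrey--Campanato and dispersive bounds of Proposition \ref{prop1}, and the virial identity \eqref{zR''} to transfer a concentration property of $v(\tau_0)$ back to $v_0$.

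First, by the scale-invariance of both the hypotheses and the conclusion under $v(x,t)\mapsto \lambda^{(2-b)/(2\sigma)}v(\lambda x,\lambda^2 t)$, I may reduce to the normalized case $\lambda_v(\tau_0)=1$, i.e.\ $\|\mathcal L_a^{1/2}v(\tau_0)\|_{L^2}=1$. In these coordinates the target \eqref{tprop2} reads $\int_{|x|\leq D_*}|v_0|^2\,dx\geq C_2$, with $F_*=\sqrt{\tau_0}$. The hypothesis \eqref{Ev} becomes $E[v_0]\leq 1/4$, and combining energy conservation with the sharp Gagliardo--Nirenberg inequality \eqref{GNacrit2} gives
\begin{equation*}
\tfrac14 \;\leq\; \tfrac12\|\mathcal L_a^{1/2}v(\tau_0)\|_{L^2}^2-E[v_0]\;=\;\tfrac{1}{2\sigma+2}\int|x|^{-b}|v(\tau_0)|^{2\sigma+2}\,dx\;\leq\;\tfrac{1}{2K_a^{2\sigma}}\|v(\tau_0)\|_{L^{\sigma_c}}^{2\sigma},
\end{equation*}
so that $\|v(\tau_0)\|_{L^{\sigma_c}}^{2\sigma}\geq K_a^{2\sigma}/2$, a pointwise-in-time critical-norm lower bound independent of $\tau_0$ and $M_0$.

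Next, I would use the Morrey--Campanato bound $\rho(v(\tau_0),M_0^{\alpha_1}\sqrt{\tau_0})\leq C_1M_0^2$ from Proposition \ref{prop1} together with the localized Gagliardo--Nirenberg-type inequalities \eqref{GNradial}--\eqref{GNradial2} to show that the exterior contribution to $\int|x|^{-b}|v(\tau_0)|^{2\sigma+2}\,dx$ coming from $\{|x|\geq R_0\}$ is at most half of the total, provided $R_0\sim M_0^{\beta}\max(1,\sqrt{\tau_0})$ is chosen large enough. Since the total is $\geq \sigma+1/2$ by the previous step, this forces $\int_{|x|\leq R_0}|x|^{-b}|v(\tau_0)|^{2\sigma+2}\,dx\gtrsim 1$, and a further application of the sharp Gagliardo--Nirenberg inequality \eqref{GNacrit2} applied to a smooth truncation of $v(\tau_0)$ to $B_{R_0}$ converts this into a localized $L^2$-concentration of the form $\int_{|x|\leq R_0}|v(\tau_0)|^2\,dx\gtrsim R_0^{2s_c}M_0^{-\beta'}$.

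The main obstacle is the third step, which transfers this concentration of $v(\tau_0)$ back to the initial datum $v_0$. For this I would consider the truncated virial quantity $\tilde z_R(t)=\int\tilde\phi_R|v(t)|^2\,dx$, where $\tilde\phi_R$ is a smooth cut-off equal to $1$ on $B_R$ and vanishing outside $B_{2R}$, with $0\leq\tilde\phi_R\leq 1$, and use the Taylor expansion
\begin{equation*}
\tilde z_R(0)\;=\;\tilde z_R(\tau_0)-\tau_0\tilde z'_R(0)-\int_0^{\tau_0}(\tau_0-\tau)\tilde z''_R(\tau)\,d\tau.
\end{equation*}
Each error term on the right-hand side can be controlled by Cauchy--Schwarz, the virial identity \eqref{virial2}, the Morrey--Campanato bound \eqref{prop1ii} and the dispersive bound \eqref{prop1i}. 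The precise form $D_*=M_0^{\alpha_3}\max[1,F_*^{(1+s_c)/(1-s_c)}]$ is dictated by the need to choose $R\sim D_*$ so that each of these errors is strictly dominated by $\tilde z_R(\tau_0)$: the factor $M_0^{\alpha_3}$ absorbs the $M_0$-dependent losses accumulated in the first two steps, while the factor $\max[1,F_*^{(1+s_c)/(1-s_c)}]$ handles the long-time regime $\tau_0\gg \lambda_v(\tau_0)^2$, in which the dispersive bound \eqref{prop1i} can only control the kinetic-energy integral at a larger spatial scale. Once the errors are absorbed, the bound $\tilde z_R(0)\leq \int_{|x|\leq 2R}|v_0|^2\,dx$ produces the desired lower bound, and reversing the initial rescaling recovers \eqref{tprop2}.
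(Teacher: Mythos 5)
Your two-stage skeleton (first concentrate $v(\tau_0)$ near the origin, then transfer the concentration back to $v_0$) matches the paper's, and your reconstruction of the first stage --- energy trapping via \eqref{GNacrit2}, localization of the potential energy using \eqref{GNradial}--\eqref{GNradial2} together with \eqref{prop1ii} --- is essentially the content of Proposition 3.3 of \cite{CF23}, which the paper simply cites to obtain \eqref{364}. Two corrections there: the conversion from potential-energy concentration to $L^2$ concentration should use the $L^2$-based Gagliardo--Nirenberg inequality \eqref{GNF} applied to the truncation, not \eqref{GNacrit2} --- the latter only returns a lower bound on the localized $L^{\sigma_c}$ norm, which cannot be converted into a lower bound on the localized $L^2$ norm (the inequality \eqref{radial1} goes the wrong way). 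Moreover, the resulting bound in your normalized frame is $\int_{|x|\leq 2R_0}|v(\tau_0)|^2\,dx\gtrsim 1$ with a universal constant (as it must be, since $C_2$ is universal and the $M_0$-losses are absorbed into the radius), not $R_0^{2s_c}M_0^{-\beta'}$.

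The genuine gap is in your transfer step. The second-order Taylor expansion $\tilde z_R(0)=\tilde z_R(\tau_0)-\tau_0\tilde z_R'(0)-\int_0^{\tau_0}(\tau_0-\tau)\tilde z_R''(\tau)\,d\tau$ forces you to estimate $\tilde z_R'(0)=2\,\mathrm{Im}\int\nabla\tilde\phi_R\cdot\nabla v_0\,\overline{v_0}\,dx$ at the single time $\tau=0$, and this requires a pointwise bound on $\|\mathcal L_a^{1/2}v(0)\|_{L^2}$ that is available nowhere: the hypotheses normalize the gradient at time $\tau_0$, and \eqref{prop1i} only controls a weighted time integral of $\|\mathcal L_a^{1/2}v(\tau)\|_{L^2}^2$, which says nothing about its value at the endpoint $\tau=0$ (it can be arbitrarily large). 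The paper avoids this entirely by stopping at first order: by \eqref{zR'2} and Cauchy--Schwarz,
\begin{equation}
\Bigl|\frac{d}{d\tau}\int\varphi_R|v|^2\,dx\Bigr|\lesssim \frac{\rho(v(\tau),R)^{1/2}}{R^{1-s_c}}\,\|\mathcal L_a^{1/2}v(\tau)\|_{L^2},
\end{equation}
and this is integrated once in time, using Cauchy--Schwarz in $\tau$ and then \eqref{prop1i} on the doubled interval $[0,2\tau_0]$ (this is precisely where the hypothesis $\tau_0\leq\tau_*/2$ enters); see \eqref{prop21}. In particular the virial identity \eqref{virial2} is never invoked in this step, so the potential-energy, bilaplacian and inverse-square terms you would otherwise need to control never appear. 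If you replace your expansion by this first-order estimate, the argument closes, and the shape $D_*=M_0^{\alpha_3}\max[1,F_*^{(1+s_c)/(1-s_c)}]$ arises exactly as you anticipated, from making the error $\rho^{1/2}M_0^{\alpha_2/2}R^{2s_c}(\sqrt{\tau_0}/R)^{1+s_c}$ small relative to $\lambda_v^{2s_c}(\tau_0)$.
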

\begin{proof}

Given $v(\tau_0)$ we define
	\begin{align}
	w(x)=\lambda_v^{\frac{2-b}{2\sigma}}(\tau_0)v(\lambda_v(\tau_0)x,\tau_0),
	\end{align}
where $\lambda_v(\tau_0)=\|\mathcal L_a^{\frac12}
 v(\tau_0)\|^{-\frac{1}{1-s_c}}_{L^2}$,
 and
 \begin{align}\label{Aep}
J_{*}=C_{*}\max[M_0^{\alpha_1}F_*,M_0^{\frac{\gamma}{1-s_c}}],
\end{align}
for $C_*>1$ large enough.
Following the same arguments as in the proof of Proposition 3.3 in \cite{CF23}, there exists a constant $c>0$ such that
	\begin{align}\label{364}
	c\leq \int_{|y|\leq 2 J_*}|w|^2\,dx=\frac{1}{\lambda_v^{2s_c}(\tau_0)}\int_{|x|\leq 2J_*\lambda_v(\tau_0)}|v(\tau_0)|^2\,dx.
	\end{align}

Now, we want to verify that the above estimate is also verified for the initial data $v_0$. Indeed, for $R>0$, applying H\"older's inequality and equivalence \eqref{equiv} we have
\begin{align}
	\left|\frac{d}{d\tau}\int \varphi_{R}|v|^2\,dx\right|=2\left|\textit{Im}\left(\int \nabla \varphi_{{R}}\cdot \nabla v\overline{v}\,dx\right)\right|&\leq \frac{c}{{R}}\|\mathcal L_a^{\frac12}
 v(\tau)\|_{L^2}\left(\int_{{R}\leq |x|\leq 2{R}}|v(\tau)|^2\,dx\right)^{\frac{1}{2}}
	\\&\leq c\frac{\rho(v(\tau),{R})^{\frac{1}{2}}}{{{R}}^{1-s_c}}\|\mathcal L_a^{\frac12}
 v(\tau)\|_{L^2}.
\end{align}
Next, integrating the above inequality from $0$ to $\tau_0\in[0,\frac{\tau_*}{2}]$ and using Cauchy-Schwarz inequality and \eqref{prop1i} we get
\begin{align}
\left|\int \varphi_{{R}}|v(\tau_0)|^2\,dx\right.&\left.-\int \varphi_{{R}}|v_0|^2\,dx\right|\\
&\leq c\frac{\rho(v(\tau),{R})^{\frac{1}{2}}}{{R}^{1-s_c}}\int_0^{\tau_0}\|\mathcal L_a^{\frac12}v(\tau)\|_{L^2}\,d\tau \leq c\frac{\rho(v(\tau),{R})^{\frac{1}{2}}}{{R}^{1-s_c}}\left(\tau_0\int_0^{\tau_0}\|\mathcal L_a^{\frac12}
 v(\tau)\|_{L^2}^2\,d\tau\right)^{\frac12}\\
&\leq c\frac{\rho(v(\tau),{R})^{\frac{1}{2}}}{{R}^{1-s_c}}\left(\int_0^{2\tau_0}(2\tau_0-\tau)\|\mathcal L_a^{\frac12}
 v(\tau)\|_{L^2}^2\,d\tau\right)^{\frac{1}{2}} \leq c\rho(v(\tau),{R})^{\frac{1}{2}} M_0^{\frac{\alpha_2}{2}}{R}^{2s_c}\left(\frac{\sqrt{\tau_0}}{R}\right)^{1+s_c}.\label{prop21}
\end{align}
Given $0<\varepsilon\ll 1$, define
\begin{align}\label{Dep}
	D_\varepsilon=\frac{1}{\varepsilon} \max\left\{F_*,F_*^{\frac{1+s_c}{1-s_c}}\right\}\max\left\{M_0^{\alpha_1},M_0^{\frac{2+\alpha_2}{2(1-s_c)}}\right\},
\end{align}
and for $D\geq D_\varepsilon$ consider
\begin{align}\label{Rtil}
	\widetilde{R}=\widetilde{R}(D,\tau_0)=D\lambda_v(\tau_0).
\end{align}
From \eqref{F} and \eqref{Rtil}, we get 
\begin{align}
	\tilde{R}=D\lambda_v(\tau_0)\geq D\frac{\lambda_v(\tau_0)}{\sqrt{\tau_0}}\sqrt{\tau_0}\geq \frac{D}{F_*}\sqrt{\tau}
	, \,\,\,\mbox{for all}\,\,\, \tau\in [0,\tau_0],
\end{align}
and thus, using \eqref{Dep}, the monotonicity of $\rho$ and \eqref{prop1ii}, we have
\begin{align}
	\rho(v(\tau),\tilde{R})\leq \rho(v(\tau),M_0^{\alpha_1}\sqrt{\tau})\leq cM_0^2, \,\,\,\mbox{for all}\,\,\, \tau\in [0,\tau_0].
\end{align}
Thereby, from \eqref{prop21}
\begin{align}
	\frac{1}{\lambda^{2s_c}_v(\tau_0)}\left|\int \varphi_{\tilde{R}}|v(\tau_0)|^2\,dx-\int \varphi_{\tilde{R}}|v_0|^2\,dx
	\right|
	\leq cM_0^{1+\frac{\alpha_2}{2}}D^{2s_c}\left(\frac{F_*}{D}\right)^{1+s_c}=cM_0^{\frac{2+\alpha}{2}}\frac{F_*^{1+s_c}}{D^{1-s_c}}.
\end{align}
Finally, since $D\geq D_{\varepsilon}\geq \frac{1}{\varepsilon} F_*^{\frac{1+s_c}{1-s_c}}M_0^{\frac{2+\alpha_2}{2(1-s_c)}}$ by \eqref{Dep}, we obtain \begin{align}
	\frac{1}{\lambda^{2s_c}_v(\tau_0)}\left|\int \varphi_{\tilde{R}}|v(\tau_0)|^2\,dx-\int \varphi_{\tilde{R}}|v_0|^2\,dx
	\right|
	< \varepsilon
\end{align}
which concludes the proof of Proposition \ref{prop2}.

\end{proof}

\subsection{Existence of blow-up solutions and lower bound for the blow-up rate}
We are now in position to prove our main results. We start with the existence of blow-up solutions.

\begin{proof}[Proof of Theorem \ref{scteo1}] Let $u\in C([0,+\infty):\dot H_a^{s_c}\cap \dot H_a^1 )$ be a global solution with $E[u_0]\leq 0$. In view of the Gagliardo-Nirenberg type inequality \eqref{GNacrit2} and $a>0$, we deduce that
	\begin{align}
	0\geq E[u_0]\geq \frac{1}{2}\|\mathcal L_a^{\frac12} u_0\|_{L^2}^2\left[1-\left(\frac{\|u_0\|_{L^{\sigma_c}}}{K_a}\right)^{2\sigma}\right].
	\end{align}
and therefore the assumptions of Proposition \ref{prop1} are satisfied which implies 
	\begin{align}
	\int_{0}^{\tau_*}(\tau_*-\tau)\|\mathcal L_a^{\frac12} u(\tau)\|_{L^2}^{2}\,d\tau\leq C_{u_0}\tau_*^{1+s_c},\,\, \mbox{for all} \,\, \tau_*>0.
	\end{align}	
Define  $\lambda_u(\tau)=\|\mathcal L_a^{\frac12} u(\tau)\|^{-\frac{1}{1-s_c}}_{L^2}$. By previous inequality there exists a sequence $\tau_n\nearrow +\infty$ such that
\begin{align}\label{lambdatau}
	\lambda_u(\tau_n)\geq C_{u_0}\sqrt{\tau_n},\,\,\,\mbox{for all}\,\,\, n\in \mathbb{N},
\end{align}
and in particular $\lambda_u(\tau_n)\nearrow +\infty$. Therefore, since $u_0\in \dot H_a^{s_c} \subset L^{\sigma_c}$, the relation \eqref{radial2} implies
\begin{align}\label{contrteo11}
	\lim_{n\to +\infty}\frac{1}{(D\lambda_u(\tau_n))^{2s_c}}\int_{|x|\leq D\lambda_u(\tau_n)}|u_0|^2\,dx=0, \,\,\,\mbox{for all}\,\,\, D\geq 0.
	\end{align}

On the other hand, we can apply Proposition \ref{prop2} at time $\tau_n$, for all $n\in \mathbb{N}$. Indeed, since $F_n = \frac{\sqrt{\tau_n}}{\lambda_u(\tau_n)} \leq C_{u_0}$, there exists $D_*$, independent of $n$, such that 
	\begin{align}\label{contrteo1}
	\frac{1}{\lambda_u^{2s_c}(\tau_n)}\int_{|x|\leq D_*\lambda_u(\tau_n)}|u_0|^2\,dx\geq C_2>0,
	\end{align}
reaching a contradiction with \eqref{contrteo11}. Therefore the solution cannot exist for all positive time completing the proof.
\end{proof}

Next, we refine the analysis to show a lower bound for the blow-up rate of the critical Sobolev norm.
\begin{proof}[Proof of Theorem \ref{scteo2}]

Let $u_0\in \dot{H_a^{s_c}}\cap \dot H_a^1$ so that the maximal time of existence $T^{\ast}>0$ of the corresponding solution $u$ to \eqref{PVI} is finite. The inequality \eqref{BUA2} yields
\begin{align}\label{HYPLB}
		\|\mathcal L_a^\frac12 u(t)\|_{L^2}\geq \frac{c}{(T^*-t)^{\frac{1-s_c}{2}}}, \,\,\, \textit{for all} \,\,\, t\in [0,T^*).
	\end{align}
	
For $\sigma_c=\frac{6\sigma}{2-b}$, we shall prove that  
\begin{align}\label{rate2}
	 \|u(t)\|_{L^{\sigma_c}}\geq |\log (T^{\ast}-t)|^{\alpha},\,\,\,as\,\,\,t\to T^{\ast},
	\end{align}
for a universal constant $\alpha=\alpha(N,\sigma,b)>0$, which implies the same lower bound for the critical Sobolev norm $\dot{H}_a^{s_c}$. The proof follows the ideas introduced by Merle and Raph\"ael \cite{MR_Bsc} and is very similar to the proof of Theorem 1.2 in \cite{CF20}. Below we sketch the main steps. First, for $t$ close enough to $T^*$, define the renormalization of $u$ given by
\begin{align}\label{v}
	v^{(t)}(x,\tau)=\lambda_u^{\frac{2-b}{2\sigma}}(t)\bar{u}(\lambda_u(t)x,t-\lambda^2_u(t)\tau),
	\end{align}
	where $\lambda_u(t)=\|\mathcal L_a^{\frac12} u(t)\|^{-\frac{1}{1-s_c}}_{L^2}$. 
One can check that $v^{(t)}(\tau)\in C([0,\tau^{(t)}]:\dot H^{s_c}\cap \dot H^1)$ is a solution to \eqref{PVI} with $\tau^{(t)}={t}/{\lambda_u^2(t)}$ and also Propositions \ref{prop1} and \ref{prop2} can be applied to $v^{(t)}(\tau_0)$ for any $\tau_0\in\left[0,{1}/{\lambda_u(t)}\right]$ (see \cite[Lemma 6.1]{CF20}). From now on, to simplify the notation, we write $v=v^{(t)}$.

Define $N(t)=-\log \lambda_u(t)$. Since $\|u(t)\|_{L^{\sigma_c}}=\|v(0)\|_{L^{\sigma_c}}$, in view of inequality \eqref{HYPLB}, the problem is reduced to find $\alpha=\alpha(N,\sigma, b)>0$ such that, for $t$ close enough to $T^{\ast}$, we have
\begin{align}\label{N1}
	\|v(0)\|_{L^{\sigma_c}}\geq [N(t)]^{\alpha}.
	\end{align}
Recalling the constants $K_a$ and $\alpha_2$ given in \eqref{GNacrit2} and \eqref{prop1i}, respectively, we define 
	\begin{align}\label{Mt1}
	L(t)=[100[M(t)]^{\alpha_2}]^{\frac{1}{2(1-s_c)}}, \,\,\,\mbox{where}\,\,\, M(t)=\frac{4\|v(0)\|_{L^{\sigma_c}}}{K_a}\geq 2. 
	\end{align}
If $L(t)\geq e^{\frac{\sqrt{N(t)}}{2}}$, then it is easy to see that \eqref{N1} holds with $\alpha=1$. On the other hand, if $L(t)<e^{\frac{\sqrt{N(t)}}{2}}$, we pick times $\tau_i\in[0,e^i]$, for all $i\in [\sqrt{N(t)},N(t)]$, such that
	\begin{align}\label{Ftaui}
	F(\tau_i)\leq L(t)\,\,\,\,\mbox{ and }\,\,\,\,\,\frac{1}{10L(t)}e^{\frac{i-1}{2}}\leq \lambda_v(\tau_i)\leq \frac{10}{L(t)}e^{\frac{i}{2}},
	\end{align}
where $F(\tau)=\sqrt{\tau}\left/\right.\lambda_v(\tau)$ (see the proof of inequality (6.10) in \cite{CF20}). Moreover, for this choice of $\tau_i$, there exist universal positive constants $\alpha_4(N,\sigma, b)$ and $c_4(N,\sigma,b)$ such that
\begin{align}\label{teste}
	\int_{\mathcal{C}_i}|v(0)|^{\sigma_c}\,dx\geq \frac{c_4}{[M(t)]^{\alpha_4s_c\sigma_c}}
	\end{align}
	where
	\begin{align}\label{Rti1}
	\mathcal{C}_i=\left\{x\in \Real^N;\, \frac{\lambda_v(\tau_i)}{[M(t)]^{\alpha_4}}\leq |x|\leq [M(t)]^{\alpha_4}\lambda_v(\tau_i)\right\},
	\end{align}
(see the proof of inequality (6.12) in \cite{CF20}). 

To finish the proof, let  $p(t)>1$ an integer such that 
	\begin{align}\label{cotapt}
	10^3[M(t)]^{2\alpha_4}\leq e^{\frac{p(t)}{2}}\leq 10^7[M(t)]^{2\alpha_4}.
	\end{align}
If $p(t)\geq \sqrt{N(t)}$, then simple computations imply \eqref{N1} with $\alpha=1/4\alpha_4$. In the remaining case $p(t)<\sqrt{N(t)}$ consider $(i,i+p(t))\in [\sqrt{N(t)},N(t)]\times [\sqrt{N(t)},N(t)]$. From\eqref{Ftaui} and \eqref{cotapt} we get
	\begin{align}
\lambda_v(\tau_{i+p(t)})&\geq \frac{e^{\frac{i+p(t)-1}{2}}}{10L(t)}
	\geq\frac{10^3[M(t)]^{2\alpha_4}e^{\frac{i-1}{2}}}{10L(t)}\geq [M(t)]^{2\alpha_4}\frac{10e^{\frac{i}{2}}}{L(t)}\geq [M(t)]^{2\alpha_4}\lambda_v(\tau_i),
	\end{align}
which implies that the annuli $\mathcal{C}_i$ and $\mathcal{C}_{i+p(t)}$ given by \eqref{Rti1} are disjoints. In this case, for $t$ close enough to $T^{\ast}$, there are at least $\frac{N(t)}{10p(t)}$ disjoint annuli, thus \eqref{teste} yields
	\begin{align}
	\|v(0)\|_{L^{\sigma_c}}^{\sigma_c}\geq \sum_{k=0}^{\frac{N(t)}{10p(t)}}\int_{\mathcal{C}_{kp(t)+\sqrt{N(t)}}}|v(0)|^{\sigma_c}\,dx\geq \frac{c_4}{[M(t)]^{\alpha_4\sigma_cs_c}}\frac{\sqrt{N(t)}}{10}=C\frac{\sqrt{N(t)}}{\|v(0)\|_{L^{\sigma_c}}^{\alpha_4\sigma_cs_c}},
	\end{align}
which implies \eqref{N1}, finishing the proof of Theorem \ref{scteo2}.
\end{proof}
%
%

\vspace{0.5cm}
\noindent 
\textbf{Acknowledgments.} L.C. was partially supported by Coordena\c{c}\~ao de Aperfei\c{c}oamento de Pessoal de N\'ivel Superior - CAPES. M.C. was partially supported by Coordena\c{c}\~ao de Aperfei\c{c}oamento de Pessoal de N\'ivel Superior - CAPES and Funda\c{c}\~ao de Amparo a Pesquisa do Estado do Piau\'i - FAPEPI. L.G.F. was partially supported by Coordena\c{c}\~ao de Aperfei\c{c}oamento de Pessoal de N\'ivel Superior - CAPES, Conselho Nacional de Desenvolvimento Cient\'ifico e Tecnol\'ogico - CNPq and Funda\c{c}\~ao de Amparo a Pesquisa do Estado de Minas Gerais - FAPEMIG. 

\newcommand{\Addresses}{{
		\bigskip
		\footnotesize
		
		LUCCAS CAMPOS, \textsc{Department of Mathematics, UFMG, Brazil}\par\nopagebreak
		\textit{E-mail address:} \texttt{luccas@ufmg.br}
		
		\medskip
		
		MYKAEL A. CARDOSO, \textsc{Department of Mathematics, UFPI, Brazil}\par\nopagebreak
		\textit{E-mail address:} \texttt{mykael@ufpi.edu.br}
		
		\medskip
		
		LUIZ G. FARAH, \textsc{Department of Mathematics, UFMG, Brazil}\par\nopagebreak
		\textit{E-mail address:} \texttt{farah@mat.ufmg.br}

}}
\setlength{\parskip}{0pt}
\Addresses

\end{document}